\title{Equivalence classes of Niho bent functions}
\author{Kanat Abdukhalikov\\	
UAE University, PO Box 15551, Al Ain, UAE\\
abdukhalik@uaeu.ac.ae}
\date{ }
\begin{document}

\maketitle

\newcommand{\Fn}{\mbox{$\mathbb{F}_{2^n}$}}                
\newcommand{\Fm}{\mbox{$\mathbb{F}_{2^m}$}}                
\newcommand{\Fnm}{\mbox{$\mathbb{F}_{2^m}^*$}}          
\newcommand{\Fmm}{\mbox{$\mathbb{F}_{2^m}^*$}}          
\newcommand{\Fpm}{\mbox{$\mathbb{F}_{p^m}$}}              
\newcommand{\Fp}{\mbox{$\mathbb{F}_{p}$}}                     
\newcommand{\Ft}{\mbox{$\mathbb{F}_{2}$}}                

\theoremstyle{plain}
\newtheorem{theorem}{Theorem}[section]
\newtheorem{lemma}[theorem]{Lemma}
\newtheorem{corollary}[theorem]{Corollary}
\newtheorem{proposition}[theorem]{Proposition}
\newtheorem{fact}[theorem]{Fact}
\newtheorem{observation}[theorem]{Observation}
\newtheorem{claim}[theorem]{Claim}

\theoremstyle{definition}
\newtheorem{definition}[theorem]{Definition}
\newtheorem{example}[theorem]{Example}
\newtheorem{conjecture}[theorem]{Conjecture}
\newtheorem{open}[theorem]{Open Problem}
\newtheorem{problem}[theorem]{Problem}
\newtheorem{question}[theorem]{Question}

\newtheorem{remark}[theorem]{Remark}
\newtheorem{note}[theorem]{Note}

\renewcommand{\arraystretch}{1.5}

\begin{abstract}
Equivalence classes of Niho bent functions are described for all known types of hyperovals. 
\end{abstract}

Keywords:  Niho bent functions, bent functions, hyperovals, ovals, line ovals. 

MSC: 51E15, 51E21, 51E23, 94A60. 

\section{Introduction}

Bent functions were introduced by Rothaus \cite{Rothaus76}  and then they  were studied by Dillon \cite{Dillon74}. 
A bent function is a Boolean function with an even number of variables which
achieves the maximum possible distance from affine functions \cite{Carlet2010}.  
They are Hadamard difference sets in elementary abelian 2-groups. 
Bent functions have relations to coding theory, cryptography, sequences, combinatorics and design theory \cite{Ab2009,Ab2015,Carlet2010,CarletMes2016,Ding2016}.

Dillon \cite{Dillon74} introduced bent functions related to partial spreads of $F \times F$, $F=\Fm$.
He constructed bent functions that are constant on the elements of a spread.
Dillon also studied a class of  bent functions that are linear on the
elements of a Desarguesian spread. These functions were thoroughly studied in 
\cite{Buda2012,Carlet2011,Dobbertin2006,Helleseth2012,Leander2006} as Niho bent functions.
In \cite{Ab2017,AbMes2017,AbMes2017-2,Ces2015,MesCM2015} these investigations were extended to
other types of spreads, and bent functions which are affine on the elements of spreads, were studied.

Carlet and Mesnager showed  \cite{Carlet2011} that any bent function which is linear on the elements 
of a Desarguesian spread (they are equivalent to Niho bent functions in a bivariate form) 
determines an o-polynomial (oval polynomial).  
Every o-polynomial defines a hyperoval, therefore Carlet and Mesnager revealed 
a general connection between Niho bent functions and hyperovals in Desarguesian planes of even order.  

However there are several equivalence classes of Niho bent functions associated with a fixed hyperoval. 
In \cite{Buda2015,Buda2016} there were constructed several new Niho bent functions 
associated with some o-polynomials and hyperovals.   
In this paper we address the question of finding all equivalence classes of Niho bent functions 
corresponding to a hyperoval. 
This question was mentioned as Open Problem 6 in \cite{Carlet2014}. 
We describe equivalence classes of Niho bent functions for all known types of hyperovals. 
We show that the hyperconics, irregular translation hyperovals, Segre and Glynn hyperovals have respectively 
2, 3, 4 and 4 equivalence classes of Niho bent functions (excluding some exceptional cases in small dimensions) 
and describe these classes. 
The Lunelli-Sce hyperoval has one class and the O'Keefe-Penttila hyperoval has 12 classes. 
For the Payne, Cherowitzo, Subiaco and Adelaide hyperovals the number of equivalence classes of  associated 
Niho bent functions increases exponentially as the dimension of the underlying vector space grows. 
Note that hyperovals are not classified yet, and the list of known hyperovals can be found in  
\cite{Cher,Cher1988,Cher1996}. 


In \cite{Ab2017,Ab2019} it was shown that bent functions linear 
on the elements of a Desarguesian spread  are in one-to-one correspondence with line ovals in an affine plane. 
Points of the line oval completely define the dual bent function. 
More precisely, the zeros of the dual function of a Niho bent function are exactly the points of the line oval 
(in other words, the dual function of a Niho bent function is obtained from the characteristic
function of the set of points of the line oval by adding all-one constant function). 
Hence this gives a general formula \cite{Ab2017,Ab2019} for the dual function for any Niho bent function 
(this question was mentioned as Open Problem 10 in  \cite{Carlet2014}).  
Furthermore, Niho bent functions are in one-to-one correspondence with ovals with nucleus at a designated point. 
Therefore, we have geometric characterization of Niho bent functions and of their duals. 

In \cite{Ab2017,Ab2019}  special $g$-functions were introduced as a new analog  of o-polynomials in case of 
univariate  presentation of functions and hyperovals. Using $g$-functions allow us to implement methods 
which do not employ the ``unusual magic action" defined by O'Keefe and Penttila \cite{OKeefe2002}. 
A criteria for existence of $g$-functions is presented in \cite{Ab2019}.

We note that there are no analogs of such bent functions in case of odd characteristic: 
 \c{C}e\c{s}melio\u{g}lu, Meidl and  Pott  \cite{Ces2015} showed that  
bent functions which are affine on elements of Desarguesian spreads over a field of odd characteristic 
will be constant  on the elements of the spread. 

The paper is organized as follows.
We recall first in Section \ref{preliminary} definitions and notation concerning bent functions, ovals and line ovals. 
In Section \ref{g-function} we study connections between $g$-functions and  o-polynomials, 
and establish general formulas for all $g$-functions and Niho bent functions corresponding to a fixed hyperoval. 
Theorem \ref{thm:oval} gives a formula for calculation of the $g$-function corresponding to a given  o-polynomial.  
In Section \ref{Niho-ovals} we study equivalence classes of Niho bent functions corresponding to all known 
hyperovals. 
Finally, in dimensions up to $m=6$ the equivalence classes were considered in detail.


\section{Preliminary considerations and notation}
\label{preliminary}

In this section we recall some definitions and notation.

\subsection{Bent functions}

Let $K=\mathbb{F}_{2^n}$ and $\mathbb{F}_{2}$ be finite fields of orders
$2^n$ and $2$ respectively.
Let $\mathbb{F}^n_2$ be an $\mathbb F_2$-vector space of dimension $n$. We shall endow
$\mathbb{F}^n_2$ with the  structure of the field $\mathbb{F}_{2^n}$.
A Boolean function on $\mathbb{F}_{2^n}$  is a mapping from $\mathbb{F}_{2^n}$ to the prime field $\mathbb{F}_2$.

If $f$ is a Boolean function defined on $\mathbb{F}_{2^n}$, then the Walsh transform of $f$   is defined as follows:
\begin{equation}
\label{Walsh}
W_f(b) = \sum_{x\in \mathbb{F}_{2^n}} (-1)^{f(x)+ b \cdot x}, 
\end{equation}
where $b\cdot x$ is  a scalar product  from  $\mathbb{F}_{2^n}\times \mathbb{F}_{2^n}$ to $\mathbb{F}_2$.
A Boolean function $f$  on $\mathbb{F}_{2^n}$ is said to be {\em bent} if its Walsh transform satisfies $W_f(b) = \pm 2^{n/2}$ for all $b \in \mathbb{F}_{2^n}$. Then $n$ is an even integer. 

Given a bent function $f$ over $\mathbb{F}_{2^n}$, one can define its {\em dual function}, denoted by
$\tilde{f}$,  by considering the signs
of the values of the Walsh transform $W_f(b)$  of $f$. More precisely, $\tilde{f}$ is defined by the equation:
$$(-1)^{ \tilde{f}(x)}2^{n/2}=W_f(x). $$
The dual of a bent function is bent again, and  $\tilde{\tilde{f}}=f$.

\subsection{Polar representations}

Let $F=\mathbb{F}_{2^m}$ be a finite field of order $q=2^m$. Consider $F$ as a subfield of $K=\mathbb{F}_{2^n}$, 
where $n=2m$, so $K$ is a two dimensional vector space over $F$. Let $F_0= \mathbb{F}_2$ be a prime field. 

As usually, $Tr_{M/L}(x)$ is the trace function with respect to a finite field extension $M/L$. 
For particular field extensions we denote the corresponding trace functions by 
$$Tr (x)=Tr_{K/ F_0}(x), \quad T (x)=Tr_{K / F}(x), \quad tr(x)=Tr_{F/ F_0}(x).$$
The \emph{conjugate} of $x\in K$ over $F$ is
$$\bar{x} =x^q.$$
Then the \emph{trace}  and the \emph{norm} maps from $K$ to $F$ are 
$$T(x) = Tr_{K/F}= x + \bar{x}= x + x^q,$$ 
$$N(x) = N_{K/F} (x) = x \bar{x} = x^{1+q}.$$
The \emph{unit circle} of $K$ is the set of elements of norm $1$:
$$S= \{ u\in K : u\bar{u} =1 \}. $$
Therefore, $S$ is the multiplicative group of $(q+1)$st roots of unity in $K$.
Since $F\cap S = \{ 1\}$, each non-zero element of $K$ has  a unique polar coordinate representation
 $$ x=\lambda u$$
 with $\lambda\in F^*$ and $u \in S$. For any $x\in K^*$ we have
 $$\lambda = \sqrt{x \bar{x}},$$
 $$u= \sqrt{x/ \bar{x}}.$$
 
 One can define nondegenerate bilinear form $\langle \cdot , \cdot \rangle : K \times K \rightarrow F$ by
$$\langle x,y \rangle =T(x\bar{y})=x\bar{y}+\bar{x}y.$$
Then the form $\langle \cdot , \cdot \rangle$ is alternating and symmetric: 
$$\langle a,a\rangle =0,$$
$$\langle a,b \rangle =\langle b,a\rangle .$$


\subsection{Affine and projective planes}

Consider points of a projective plane $PG(2,q)$ in homogeneous coordinates as triples $(x : y : z)$, 
where $x, y, z \in F$, $(x,y,z)\neq (0,0,0)$, 
and we identify $(x : y : z)$ with $(\lambda x : \lambda y : \lambda z)$, $\lambda\in F^*$. Then 
points of $PG(2,q)$ are 
$$\{ (x : y : 1) \mid x \in F, \ y \in F \} \cup \{ (x:1:0) \mid x\in F\}  \cup \{ (1:0:0) \},$$

For $a, b, c \in F$, $(a,b,c)\neq (0,0,0)$,  the line $[a:b:c]$ in $PG(2,q)$ is defined  as 
$$[a:b:c] = \{ (x:y:z) \in PG(2,q) \mid ax+by+cz =0\}.$$
Triples $[a:b:c]$ and $[\lambda a: \lambda b: \lambda c] $ with $\lambda\in F^*$ define same lines. 
Then any line in $PG(2,q)$ is one of the following lines: 
\begin{eqnarray*}
[a:b:1] & = & \{ (x:y:1) \mid ax+by+1=0\} \cup \{(b:a:0)\}, \ (a,b) \neq (0,0), \\
{[ 0:0:1]}          & = & \{ (x:1:0) \mid x\in F \} \cup  \{(1:0:0)\},  \\
{[a:1:0]}           & = & \{ (x:ax:1) \mid x\in F \} \cup  \{(1:a:0)\},  \ a\in F, \\
{[1:0:0]}          & = & \{ (0:y:1) \mid y\in F \} \cup  \{(0:1:0)\}.  
\end{eqnarray*}

The point $(x : y : z)$ is incident with the line $[a:b:c]$ if and only if $ax+by+cz=0$. 
The map $(x : y : z) \mapsto [x:y:z]$ defines a duality \cite{Hir} for $PG(2,q)$. 

We shall call points of the form $(x:y:0)$  the points at infinity. 
Then $[0:0:1]$ indicates the line at infinity (it consists of all points at infinity). 
We define an affine plane $AG(2,q) = PG(2,q)\setminus [0:0:1]$, so points of this affine plane $AG(2,q)$ are 
$$\{ (x : y : 1) \mid x, y \in F \} .$$
Associating $(x : y : 1)$ with $(x,y)$ we can identify points of the affine plane $AG(2,q)$ with  elements 
of the vector space  
$$V(2,q) = \{ (x,y) \mid x, y \in F \} ,$$
and we will write $AG(2,q)=V(2,q)$. 

Lines in $AG(2,q)=V(2,q)$ are  $\{ (c,y) \mid y \in F\}$ and $\{ (x,xb +a) \mid x \in F\}$, $a, b, c \in F$.
These lines can be described by equations $x=c$ and $y=x b +a$.

We introduce now other representation of $PG(2,q)$ using the field $K$. 
Consider pairs $(x : z)$, where $x \in K$, $z\in F$, $x\neq 0$ or 
$z\neq 0$, and we identify $(x : z)$ with $(\lambda x : \lambda z)$, $\lambda\in F^*$. 
Then points of $PG(2,q)$ are 
$$\{ (x : 1) \mid \ x \in K \} \cup \{ (u:0) \mid u\in S\}.$$

For $\alpha \in K$ and $\beta\in F$ we define lines $[\alpha :\beta ]$ in $PG(2,q)$  as 
$$[\alpha :\beta ] = \{ (x:z) \in PG(2,q) \mid \langle \alpha,x \rangle +\beta z =0\}.$$
Pairs $[\alpha :\beta ]$ and $[\lambda\alpha :\lambda\beta ]$ with $\lambda\in F^*$ define same lines. 
Then any line in $PG(2,q)$ is one of the following lines: 
\begin{eqnarray*}
[\alpha :1] & = & \{ (x:1) \mid \langle \alpha,x \rangle +1=0\} \cup \{(\alpha :0)\}, \ \alpha\neq 0, \\
{[ 0:1]}          & = & \{ (u:0) \mid u \in S \}, \\
{[u:0]}           & = & \{ (\lambda u:1) \mid \lambda \in F\} \cup \{ (u:0) \}, \ u\in S. 
\end{eqnarray*} 

The point $(x : z)$ is incident with the line $[\alpha : \beta]$ if and only if $\langle \alpha,x \rangle +\beta z =0$. 
The map $(x : z) \mapsto [x:z]$ defines a duality \cite{Hir} for $PG(2,q)$. 

Element $u_{\infty} = (u:0)$, $u\in S$,  will be referred to as the point at infinity in the direction of $u$. 
So $[0:1]$ indicates the line at infinity.  

We define an affine plane $AG(2,q) = PG(2,q)\setminus [0:1]$, so points of this affine plane $AG(2,q)$ are 
$$\{ (x : 1) \mid x \in K \} .$$

Associating $(x : 1)$ with $x \in K$ we can identify points of the affine plane $AG(2,q)$ with elements of 
the field $K$, and we will write $AG(2,q)=K$. 

Lines of  $AG(2,q)=K$ can be considered as the zeroes of an equation $\langle a,x \rangle+b=0$. 
Normalizing  $a$ to $u\in S$, we see that lines of  $AG(2,q)$ can be considered  as the zeroes of an equation 
$\langle u,x \rangle+\mu =0$:
$$L(u,\mu) =\{ x \in K : \langle u,x \rangle+\mu =0 \},$$
where $u\in S$ and $\mu\in F$ (see for details  \cite[subsection 2.1]{Ball1999}).
$L(u,\mu)$ can be considered as a line in the direction of  $u$. 
Note that there are $(q+1)q =q^2+q$ such lines. 
Lines $L(u,\lambda)$ and $L(u,\mu)$ are parallel, and $L(u,0)=\{ \lambda u \mid \lambda \in F \}$. 
Lines $L(u,\lambda)$ and $L(v,\mu)$ are not parallel for distinct $u, v \in S$, since $\langle x,y\rangle$ is 
a nondegenerate alternating bilinear form on $K$, considered as a two dimensional vector space over $F$.

Throughout the paper, we will consider these two  representations of projective planes $PG(2,q)$, 
and for each of such projective planes we consider a fixed affine plane $AG(2,q)$, described above. 
They will be written as $AG(2,q)=V(2,q)$ and $AG(2,q)=K$.

\subsection{Ovals and line ovals}

Let $PG(2,q)$ be a finite projective plane of order $q=2^m$.
An \emph{oval} is a set of $q+1$ points, no three of which are collinear. Dually, a \emph{line oval} is a set of $q+1$
lines no three of which are concurrent. Any line of the plane meets the oval $\mathcal{O}$ at either
0, 1 or 2 points and is called exterior, tangent or secant, respectively. 
All the tangent lines to the oval $\mathcal{O}$ concur  \cite{Hir} at the same point $N$, called the \emph{nucleus} 
 of $\mathcal{O}$. The set $\mathcal{H}=\mathcal{O} \cup N$ becomes a \emph{hyperoval}, that is a set of $q+2$
points, no three of which are collinear. 
Conversely, by removing any point from hyperoval one gets an oval. 
If $\mathcal{O}'$ is a line oval, then there is exactly one line $\ell$
such that on each of its points there is only one line of $\mathcal{O}'$. This line is called the (\emph{dual})
\emph{nucleus} of $\mathcal{O}'$. 
The $(q+2)$-set $\mathcal{H}'=\mathcal{O}'\cup \{ \ell\}$ is a \emph{line hyperoval} or \emph{dual hyperoval}.

By a \emph{line oval} $\mathcal{O}$ in an affine plane $AG(2,q)$ we assume a set of $q+1$ nonparallel 
lines in $AG(2,q)$, such that these lines, extended by the corresponding points at infinity, 
determine a line oval in $PG(2,q)$ whose nucleus is the line at infinity. 

For any oval in $PG(2,q)$ there are $q(q+1)/2$ secants and $q(q-1)/2$ exterior lines. 
Let $\mathcal{O}$ be a line oval in the affine plane $AG(2,q)=V(2,q)$ 
and $E(\mathcal{O})$ the set of points which are on the lines of the line oval $\mathcal{O}$:
$$E(\mathcal{O}) = \{ (x,y) \in V(2,q) \mid (x,y) \ \rm{ is \ on \ a \ line \ of \ } \mathcal{O} \}.$$
Then each point of $E(\mathcal{O})$ belongs  to two lines of $\mathcal{O}$, 
$$|E(\mathcal{O})|=q(q+1)/2 $$ 
and there are $q(q-1)/2$ points in $AG(2,q)$ that do not belong to $\mathcal{O}$ 
(see, for example, \cite{Dem,Hir}).  
Note that Kantor \cite[Theorem 7]{Kantor75} showed that $E(\mathcal{O})$ is a difference set.

Any line oval in $AG(2,q)=K$ can be represented \cite{Ab2019} as $\mathcal{O} = \{ L(u,g(u)) : u\in S\}$ 
for some function $g: S \rightarrow F$.

\subsection{Niho bent functions} 
\label{bent-1var}

Desarguesian spread on $V(2,q)$ is a collection  of $q+1$  one-dimensional subspaces 
 $\{ (x,xt) \mid x\in F \}$, $t\in F$, and $\{(0,y) \mid y\in F \}$.
So every nonzero element of $V(2,q)$ lies in a unique subspace \cite{Dem}. 

The field $K$ can be considered as a two dimensional vector space over $F$. 
Then the set
$$\{ uF : u \in S \}$$
is a spread on this vector space.  We consider Boolean functions $f: K \rightarrow \Ft$, which are $\Ft$-linear 
on each element of the spread $\{ uF : u \in S \}$.
Then  for any $x =\lambda u\in K$, where $\lambda \in F$, $u\in S$, the function $f$ can be defined by
\begin{equation}
\label{func}
f(x)=tr(\lambda g(u))
\end{equation}
for some function $g: S \rightarrow F$. 
If such a function is bent then it is called  {\em Niho bent function}. These functions can be written with the help 
of Niho power functions, so is the name.

Let Boolean function $f: K \rightarrow \Ft$ be linear on elements of the spread $\{ uF : u \in S \}$. 
Now we consider bent functions, and we use the scalar product $a \cdot b = tr(\langle a,b\rangle )$ 
in the formula (\ref{Walsh}) for the Walsh transform (as it was introduced in \cite{Ab2019}). 
The following result was proved in \cite{Ab2019}: 
 \begin{theorem}
\label{thm:main}
Let function $f : K \rightarrow {\mathbb F}_2$ be defined by  $f(\lambda u)=tr(\lambda g(u))$,  
for some function $g: S \rightarrow F$. 
Then the following statements are equivalent:
\begin{enumerate}
\item
The function $f$  is  bent; 
\item
The set $\{ L(u,g(u)) \mid u\in S\}$  is a line oval in $K=AG(2,q)$; 
\item
The set $\{ \frac{u}{g(u}) \mid u\in S\}$ is  an oval in $PG(2,q)$ whose nucleus is the origin. 
\end{enumerate}
\end{theorem}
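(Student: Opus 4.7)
The plan is to prove $(1) \Leftrightarrow (2)$ by a direct Walsh transform computation, and $(2) \Leftrightarrow (3)$ by projective duality. For the first equivalence, I would write an arbitrary nonzero $x \in K$ in polar form $x = \lambda u$ with $\lambda \in F^*$, $u \in S$, and use the prescribed scalar product $a \cdot x = tr(\langle a, x \rangle)$ together with the $F$-bilinearity $\langle a, \lambda u\rangle = \lambda \langle a, u \rangle$. The Walsh sum then becomes
$$W_f(a) = 1 + \sum_{u \in S}\sum_{\lambda \in F^*} (-1)^{tr(\lambda(g(u) + \langle a, u \rangle))}.$$
Completing the inner sum to all of $F$ and applying the orthogonality of $tr$ gives, after a short bookkeeping step, $W_f(a) = q(\nu(a) - 1)$, where $\nu(a) = |\{u \in S : a \in L(u, g(u))\}|$ counts the lines of the collection through $a$. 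Hence $f$ is bent iff $\nu(a) \in \{0, 2\}$ for every $a \in K$.

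Next I would translate this combinatorial condition into the line-oval property. The $q+1$ lines $L(u, g(u))$ lie in $q+1$ distinct affine directions $u \in S$, so they are pairwise nonparallel; in the projective completion they acquire $q+1$ distinct points at infinity sitting on the line at infinity. The bound $\nu(a) \leq 2$ at each finite point $a$ then says no three of the lines are concurrent at a finite point, which together with the distinct directions is precisely the definition of a line oval in $AG(2,q) = K$ with the line at infinity as nucleus. Summing $\nu(a)$ over $a \in K$ yields $q(q+1)$, which automatically forces both values $0$ and $2$ to occur in the correct proportions (recovering $|E(\mathcal{O})| = q(q+1)/2$). This completes $(1) \Leftrightarrow (2)$.

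For $(2) \Leftrightarrow (3)$ I would invoke the duality $(x : z) \mapsto [x : z]$ of $PG(2,q)$ from the second coordinatisation. It sends the projective completion of the line $L(u, g(u))$, namely $[u : g(u)]$, to the point $(u : g(u))$, and the line $[0 : 1]$ at infinity (nucleus of the line oval) to the point $(0 : 1)$, i.e.\ the origin of $AG(2,q) = K$. Since a duality of $PG(2,q)$ maps line ovals to ovals and exchanges nuclei, (2) is equivalent to the statement that $\{(u : g(u)) : u \in S\}$ is an oval with nucleus at the origin. Because the nucleus of an oval is not on the oval, this forces $g(u) \neq 0$ for every $u \in S$, and then $(u : g(u)) = (u/g(u) : 1)$ is the affine point $u/g(u) \in K$, yielding statement (3).

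The main obstacle is the first equivalence: one must correctly identify the purely combinatorial condition $\nu(a) \in \{0, 2\}$ arising from the Walsh computation with the geometric definition of a line oval, checking both that the $q+1$ lines are pairwise nonparallel (so the affine condition extends to a genuine line oval in $PG(2,q)$) and that the double-counting of incidences matches the standard incidence counts for line ovals in even characteristic. Once this translation is in place, the duality step in $(2) \Leftrightarrow (3)$ is essentially bookkeeping, with the only subtlety being the verification that the designated nucleus at infinity is carried to the origin by the chosen coordinates.
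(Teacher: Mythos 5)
Your overall strategy---computing $W_f(a)=q(\nu(a)-1)$ with the scalar product $tr(\langle a,x\rangle)$, reading off bentness as $\nu(a)\in\{0,2\}$, and then dualising via $(x:z)\mapsto[x:z]$---is the natural one; note that the paper itself does not prove Theorem \ref{thm:main} but cites \cite{Ab2019} for it, so the comparison is against the standard argument, whose Walsh-transform half your computation reproduces correctly. There are, however, two places where your write-up does not close the argument. In the direction $(2)\Rightarrow(1)$ you must rule out $\nu(a)=1$ at finite points, and the double-counting you offer does not do this: from $\nu(a)\le 2$ and $\sum_{a\in K}\nu(a)=q(q+1)$ you only get $n_1+2n_2=q(q+1)$ together with $n_0+n_1+n_2=q^2$, two equations in three unknowns, which does not force $n_1=0$. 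What is actually needed is the geometric fact (dual to ``all tangents of an oval pass through the nucleus'') that the points lying on exactly one line of a line oval are precisely the points of its dual nucleus; since the dual nucleus here is the line at infinity, every finite point automatically satisfies $\nu(a)\in\{0,2\}$. This fact is standard and is recorded in the paper's preliminaries (each point of $E(\mathcal{O})$ lies on exactly two lines and $|E(\mathcal{O})|=q(q+1)/2$), but it must be invoked explicitly; counting alone is not enough.

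Second, in $(2)\Leftrightarrow(3)$ your claim that ``the nucleus of an oval is not on the oval, [so] this forces $g(u)\neq 0$'' is false: if $g(u)=0$ the dual point is $(u:0)$, a point at infinity, not the origin $(0:1)$, so nothing is contradicted. The paper explicitly allows $g(u)=0$ and adopts the convention $u/g(u)=u_\infty$ in that case (and later, e.g.\ in Lemma \ref{lem:g}, determines exactly when $g_r(u)=0$ occurs); statement (3) places the oval in $PG(2,q)$ rather than in $K$ precisely for this reason. The duality step itself is fine---the correlation preserves the symmetric form, sends $[u:g(u)]$ to $(u:g(u))$ and $[0:1]$ to the origin---once you drop the spurious nonvanishing claim and read $u/g(u)$ with that convention.
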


Here we assume that if $g(u)=0$ then $\frac{u}{g(u)} = u_{\infty}$ is an element at infinity in the direction $u$. 

Therefore, Niho bent functions are in one-to-one correspondence with the ovals in  $PG(2,q)$ with nucleus 
at the origin. A function $g: S \rightarrow F$ is said to be a  $g$-function if it satisfies conditions of 
Theorem \ref{thm:main}. 

Boolean functions $f, f': \mathbb{F}_{2^n}\rightarrow \mathbb{F}_2$ are \emph{extended-affine equivalent}
 if there exist an affine permutation $L$ of $\mathbb{F}_{2^n}$ and an affine function 
$\ell:\mathbb{F}_{2^n}\rightarrow \mathbb{F}_2$ such that $f'(x)=(f\circ L)(x)+\ell(x)$. 
In our paper we will call such functions just \emph{equivalent}. 
If Boolean functions $f$ and $f'$ are equivalent and $f$ is bent then $f'$ is bent too. 

We recall that the automorphism (collineation) group of $PG(2,q)$ is
$P\Gamma L(3,q) = PGL(3,q)\langle\sigma\rangle$ and the automorphism group of $AG(2,q)$ is
$A\Gamma L(2,q) = AGL(2,q)\langle\sigma\rangle$, where $\langle\sigma\rangle$ is the Galois group of $F$,
and $AGL(2,q) = F^2\cdot GL(2,q)$ is the affine group. 
Hyperovals (and ovals) are called (projectively) equivalent if they are  equivalent under the action of the group 
$P\Gamma L(3,q)$.  

Niho bent functions are equivalent if and only if the corresponding ovals (with nucleus at the origin) 
are projectively equivalent \cite{Ab2017,Penttila2014}. 
Note also that functions $g(u)$ and $g(u) + \langle c, u\rangle$, where $c\in K$, lead to equivalent ovals 
\cite{Ab2019}.

\begin{remark}
In thesis \cite{Deorsey2015}, following ideas from \cite{Fisher2006}, the $\rho$-polynomials were introduced. 
We note that the $\rho$-polynomials and our $g$-functions are connected in the following way: $g(u)=1/\rho(u)$.
\end{remark}

\section{O-polynomials, $g$-functions and Niho bent functions} 
\label{g-function}

Following \cite{Fisher2006}, consider an element $i\in K$ with property $T(i)=i+i^q=1$. 
Then $K=F(i)$ and $i$  is a root of a quadratic equation 
$$z^2+z+\delta=0,$$ 
where $\delta = N(i) \in F$. 
Any element $z\in K$ can be represented as $z=x+yi$,  where $x, y \in F$. 
For $z=x+yi$ we have $x= \langle i, z\rangle$ and $y= \langle 1, z\rangle$. 

Note that if $m$ is odd then one can choose  $i=\omega$, $\omega^2 +\omega +1=0$.   
So if $w\in K$ is a generator of $S$ then we can take $i =\omega = w^{(q+1)/3}$. 
 
Every hyperoval is equivalent to one, given by  
$$\mathcal{D} (h)= \{( t: h(t): 1) \mid t\in F\} \cup \{ (1:0:0) \} \cup \{ (0:1:0) \},$$
where  $h(t)$ is an o-polynomial. 
We can describe ovals in a similar way, but ensuring that the nucleus of the oval is the point $(1:0:0)$: 
$$\mathcal{E} (h)= \{( t: h(t): 1) \mid t\in F\}  \cup \{ (0:1:0) \}.$$
Now we would like to find an equivalent representation of this  oval in $PG(2,q)$ using the field $K$,  
namely, we would like to write the oval $\mathcal{E} (h)$ in the form 
$$\left\{ \frac{u}{g(u)} \mid u \in S\right\}$$ 
for some function $g: S \rightarrow F$. 


\begin{theorem}
\label{thm:oval} 
Let $h(t)$ be an  o-polynomial. 
Define an oval $\mathcal{E}(h)$ with nucleus $(1:0:0)$ by 
$$\mathcal{E}(h) = \{( t: h(t): 1) \mid t\in F\} \cup \{ (0:1:0) \} .$$
Then corresponding function $g(u)$  can be determined by 
$$g(u) = h^{-1} \left(\frac{ \langle i, u \rangle}{ \langle 1, u\rangle} \right) \langle 1, u\rangle+  \langle i, u\rangle, 
\quad g(1)=1.$$ 
\end{theorem}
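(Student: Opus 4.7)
My plan is to exhibit an explicit collineation between the two models of $PG(2,q)$ used in the paper and directly verify that it sends $\mathcal{E}(h)$ to $\{u/g(u) : u \in S\}$ for the stated $g$. Concretely, I would take $\phi(x:y:z) = (y+zi : x+y)$; a determinant check in the $F$-basis $\{1,i\}$ of $K$ shows $\phi$ is an invertible projective map, with inverse $\phi^{-1}(a+bi : 1) = (1+a : a : b)$. One verifies $\phi(1:0:0) = (0:1)$, so the nucleus of $\mathcal{E}(h)$ is sent to the origin of $K$, and $\phi(0:1:0) = (1:1)$, the affine point $1 \in K$. By Theorem~\ref{thm:main}, $\phi(\mathcal{E}(h))$ is automatically an oval of the form $\{u/g(u) : u \in S\}$ for a unique $g$-function, so the task reduces to identifying this $g$ with the stated formula.

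For $u \in S \setminus \{1\}$, write $u = x+yi$ with $x,y \in F$; since $S \cap F = \{1\}$, $y \ne 0$. Using $\bar i = i+1$ and $T(i) = 1$ one gets $\langle i,u\rangle = x$ and $\langle 1,u\rangle = y$, so with $t := x/y$ and $s := h^{-1}(t)$ we have $x = yh(s)$, $u = y(h(s)+i)$, and the proposed value is $g(u) = sy + x = y(s+h(s))$. The candidate affine point is thus $w := u/g(u) = (h(s)+i)/(s+h(s))$, and the key step is to evaluate $\langle 1,w\rangle$ and $\langle i,w\rangle$ in closed form. Using $\bar i = i+1$ together with $i^2 + i + \delta = 0$, I expect the trace computation to collapse to
\[
\langle 1,w\rangle = \frac{1}{s+h(s)}, \qquad \langle i,w\rangle = \frac{h(s)}{s+h(s)},
\]
after which $\phi^{-1}(w : 1) = (1+\langle i,w\rangle : \langle i,w\rangle : \langle 1,w\rangle)$ clears denominators to $(s+h(s)+h(s) : h(s) : 1) = (s : h(s) : 1) \in \mathcal{E}(h)$. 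The degenerate case $s+h(s) = 0$ is handled identically in homogeneous form, producing a point at infinity $\phi^{-1}(h(s)+i : 0) = (h(s):h(s):1) = (s:h(s):1)$. For $u = 1$, $g(1) = 1$ gives $u/g(u) = 1 \in K$, and $\phi^{-1}(1 : 1) = (0:1:0)$ accounts for the remaining oval point.

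To finish, I would observe that the parametrization $s \mapsto u(s) \in S$ proportional to $h(s)+i$ is injective because $h$ is a permutation and matching $i$-components forces $h(s)$ to be determined by $u$; its image is precisely $S \setminus \{1\}$ (an element of $S$ equals $1$ iff its $i$-component vanishes, and $h(s)+i$ never does), so together with $u=1$ one recovers all $q+1$ elements of $S$. Hence $\phi(\mathcal{E}(h)) = \{u/g(u) : u \in S\}$, proving that the stated $g$ is the $g$-function for $\mathcal{E}(h)$. The main obstacle I anticipate is the closed-form evaluation of $\langle 1,w\rangle$ and $\langle i,w\rangle$; this is not conceptually deep but requires careful use of $T(i) = 1$, $T(i^2) = T(i)^2 = 1$, and $\bar i = i+1$, together with cancellations special to characteristic two.
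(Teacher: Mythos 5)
Your proposal is correct, and at its core it uses the same collineation as the paper: the paper applies $\alpha((a:b:c))=(b:c:(a+b+cd))$ and then identifies $(x:y:1)$ with $x+yi$, and the composite of these two steps with $d=0$ is exactly your map $\phi((a:b:c))=(b+ci:a+b)$. The difference is in how the two arguments cope with oval points that land on the line at infinity of the $K$-model. The paper insists on choosing $d$ so that $t+h^{-1}(t)+d\neq 0$ for all $t$ (so the whole image is affine), which produces $g(u)=h^{-1}\left(\frac{\langle i,u\rangle}{\langle 1,u\rangle}\right)\langle 1,u\rangle+\langle i,u\rangle+\langle d,u\rangle$ and then requires the final step of discarding $\langle d,u\rangle$ as an equivalence; so strictly the paper obtains the stated formula only up to adding a linear term. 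You instead take $d=0$ and absorb the degenerate case $s+h(s)=0$ into the convention $u/g(u)=u_\infty$, which yields the stated $g$ exactly, with no appeal to equivalence of $g$-functions and no auxiliary parameter. Your explicit check that $\phi^{-1}(w:1)$ clears denominators to $(s:h(s):1)$, together with the bijectivity of $s\mapsto u(s)$ onto $S\setminus\{1\}$, closes the argument. One remark: the step you flag as the main anticipated obstacle is actually immediate, since $w=\frac{h(s)}{s+h(s)}+\frac{1}{s+h(s)}\,i$ is already expressed in the basis $\{1,i\}$ with coefficients in $F$, so $\langle i,w\rangle$ and $\langle 1,w\rangle$ are read off from the general identities $\langle i,x+yi\rangle=x$ and $\langle 1,x+yi\rangle=y$ with no further trace computation.
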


\begin{proof} 
We apply a collineation to the hyperoval $\mathcal{D}(h)$ in order to get a hyperoval without points on infinity, 
such that the point $(1:0:0)$ is mapped to $(0:0:1)$ and the point $(0:1:0)$ is mapped to $(1:0:1)$. 
Define 
$$\alpha ( (a:b:c) ) = (b:c:(a+b+cd))$$ 
for some $d\in F$. 
Then $\alpha ((1:0:0))  = (0:0:1)$, $\alpha ((0:1:0)) = (1:0:1)$ and  $\alpha ((h^{-1}(t):t:1)) = (t:1:(t+h^{-1}(t)+d))$. 
We want to have $t+h^{-1}(t)+d\ne 0$ for all $t\in F$. It happens if we choose $d\in F$ in such a way that the line 
$x+y+dz=0$ does not intersect the hyperoval  
$\mathcal{H}' =  \{( t: h^{-1}(t): 1) \mid t\in F\} \cup \{ (1:0:0) \} \cup \{ (0:1:0) \}$. 

Now we associate the point $(x:y:1)$ with the point $z=x+yi \in K$. 
Then $(1:0:1)$ corresponds to $1\in K$ and $(0:0:1)$ corresponds to $0\in K$. 
Assume that the oval $\alpha(\mathcal{E}(h))$ corresponds to the oval $\left\{ \frac{u}{g(u)} \mid u \in S\right\}$. 
If $z= \frac{u}{g(u)} = x+yi$ then $x = \frac{1}{g(u)} \langle i, u\rangle$ and $y = \frac{1}{g(u)} \langle 1, u\rangle$. 
Considering points  
$$\left( \frac{t}{t+h^{-1}(t)+d}:  \frac{1}{t+h^{-1}(t)+d}: 1\right)$$  
from the oval $\alpha(\mathcal{E}(h))$  we see that 
$$ \frac{t}{t+h^{-1}(t)+d} =  \frac{1}{g(u)} \langle i,u\rangle , $$
$$ \frac{1}{t+h^{-1}(t)+d} =  \frac{1}{g(u)} \langle 1,u\rangle . $$
Therefore, 
$$t = \frac{ \langle i, u \rangle}{ \langle 1, u\rangle},$$
$$g(u) = (t+h^{-1}(t)+d ) \cdot \langle 1,u\rangle = 
h^{-1} \left(\frac{ \langle i, u \rangle}{ \langle 1, u\rangle} \right) \langle 1, u\rangle+  \langle i, u\rangle + 
\langle d, u\rangle .$$
One has $g(1) =1$ since  $1 \in \{ \frac{u}{g(u)} \mid u \in S \}$ and the equality $1=\frac{u}{g(u)}$ implies 
$u=1$, $g(u)=1$.   
Finally, we can subtract  from $g(u)$ the linear part $\langle d, u\rangle$, since adding linear function 
$\langle d, u\rangle$ to $g(u)$  will produce an equivalent function. 
\end{proof} 

\begin{remark} 
In place of the function 
$g(u) = h^{-1} \left(\frac{ \langle i, u \rangle}{ \langle 1, u\rangle} \right) \langle 1, u\rangle+  \langle i, u\rangle$ 
one can consider 
$g'(u) = h^{-1} \left(\frac{ \langle i, u \rangle}{ \langle 1, u\rangle} \right) \langle 1, u\rangle$, they produce 
equivalent ovals and equivalent Niho bent functions, but in this case one has $g'(1)= 0$. 
\end{remark}

\begin{corollary}
\label{cor:poly} 
Let $h(t)=  t^s$ be an o-polynomial. Then its corresponding $g$-function and Niho bent function can be written 
respectively as  
$$g(u)=  \langle i, u\rangle^{s^{-1}} \cdot \langle 1, u\rangle^{q-s^{-1}},$$ 
$$f(x)=  tr \big(  \langle i, x\rangle^{s^{-1}} \cdot \langle 1, x\rangle^{q-s^{-1}}  \big) = 
 tr \big(  (\bar{i} x + i \bar{x})^{s^{-1}} \cdot (x + \bar{x})^{q-s^{-1}}  \big),$$
 where $s^{-1}$ is the inverse of  $s$ modulo $q-1$. 
\end{corollary}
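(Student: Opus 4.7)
The plan is to simply specialize Theorem~\ref{thm:oval} to the monomial case $h(t)=t^s$ and then unwind the correspondence $f(\lambda u)=tr(\lambda g(u))$ into a formula in $x$.

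First I would note that an o-polynomial is a permutation of $F$, so $\gcd(s,q-1)=1$ and $h^{-1}(t)=t^{s^{-1}}$. Substituting into the formula of Theorem~\ref{thm:oval} gives
\[
g(u)=\left(\frac{\langle i,u\rangle}{\langle 1,u\rangle}\right)^{\!s^{-1}}\!\langle 1,u\rangle+\langle i,u\rangle
=\langle i,u\rangle^{s^{-1}}\langle 1,u\rangle^{1-s^{-1}}+\langle i,u\rangle.
\]
The summand $\langle i,u\rangle$ is of the form $\langle c,u\rangle$ with $c=i\in K$, and the paragraph preceding the corollary states that adding such a linear function to $g$ produces an equivalent $g$-function (and equivalent Niho bent function), so I would drop it. Since $\langle 1,u\rangle\in F$ and $F^*$ has order $q-1$, the exponent $1-s^{-1}$ can be replaced by $q-s^{-1}$, yielding
\[
g(u)=\langle i,u\rangle^{s^{-1}}\,\langle 1,u\rangle^{q-s^{-1}}.
\]
The only value of $u\in S$ where $\langle 1,u\rangle=0$ is $u=1$ (since $\langle 1,u\rangle=u+u^{-1}$ on $S$), at which point this convention gives $g(1)=0$; this is exactly the alternative normalization $g'(1)=0$ mentioned in the remark after Theorem~\ref{thm:oval}, so it is consistent.

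For the Niho bent function I would pass from $g$ to $f$ via $f(\lambda u)=tr(\lambda g(u))$. The key calculation is that the bilinear form is $F$-linear in the second argument, so for $x=\lambda u$ with $\lambda\in F$, $u\in S$,
\[
\langle 1,x\rangle=\lambda\langle 1,u\rangle,\qquad \langle i,x\rangle=\lambda\langle i,u\rangle.
\]
Multiplying the two power expressions then gives
\[
\langle i,x\rangle^{s^{-1}}\langle 1,x\rangle^{q-s^{-1}}
=\lambda^{s^{-1}+(q-s^{-1})}\,g(u)=\lambda^{q}g(u)=\lambda g(u),
\]
where the last equality uses $\lambda^q=\lambda$ for $\lambda\in F=\mathbb{F}_{2^m}$. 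Therefore $f(x)=tr\bigl(\langle i,x\rangle^{s^{-1}}\langle 1,x\rangle^{q-s^{-1}}\bigr)$, and expanding $\langle 1,x\rangle=x+\bar x$ and $\langle i,x\rangle=\bar i x+i\bar x$ yields the stated formula.

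There is no real obstacle here beyond careful bookkeeping; the only subtle point is reconciling the exponent shift $1-s^{-1}\leftrightarrow q-s^{-1}$ with the boundary case $u=1$, which is handled by invoking the equivalence $g\sim g+\langle c,\cdot\rangle$ as above.
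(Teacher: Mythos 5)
Your proof is correct and follows essentially the same route as the paper: specialize Theorem~\ref{thm:oval} to $h(t)=t^s$, discard the linear term $\langle i,u\rangle$ via the equivalence $g\sim g+\langle c,\cdot\rangle$ (as in the remark following that theorem), and then convert to a univariate formula in $x$. The only cosmetic difference is in the last step, where the paper substitutes $\lambda=\sqrt{x\bar{x}}$, $u=\sqrt{x/\bar{x}}$ directly, while you use $F$-linearity of $\langle\cdot,\cdot\rangle$ in its second argument together with $\lambda^{q}=\lambda$ --- an equivalent and slightly tidier piece of bookkeeping.
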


\begin{proof} 
 We have 
$$g(u) = h^{-1}\left(\frac{\langle i, u\rangle}{\langle 1, u\rangle} \right) \langle 1, u\rangle = 
\left(\frac{\langle i, u\rangle}{\langle 1, u\rangle} \right)^{s^{-1}} \langle 1, u\rangle =  
\langle i, u\rangle^{s^{-1}} \cdot \langle 1, u\rangle^{q-s^{-1}}.$$
Further, since $x=\lambda u$, $\lambda = \sqrt{x\bar{x}}$, $u=\sqrt{x/\bar{x}}$, we have  
\begin{eqnarray*} 
f(x)  
& =  &  
tr (\lambda g(u))  \\
& =  &  
tr \big(\sqrt{x\bar{x}} \cdot 
\langle i, \sqrt{x/\bar{x}}\rangle^{s^{-1}} \cdot \langle 1, \sqrt{x/\bar{x}}\rangle^{q-s^{-1}} \big)  \\
& =  &  
tr \big(\sqrt{x\bar{x}} \cdot 
\big(\bar{i} \sqrt{x/\bar{x}} + i \sqrt{\bar{x}/x}\big)^{s^{-1}} \cdot  \big(\sqrt{x/\bar{x}} + \sqrt{\bar{x}/x} \big)^{q-s^{-1}} \big) \\
& =  &  
tr \left(\sqrt{x\bar{x}} \cdot 
\left(\frac{\bar{i} x + i \bar{x}}{\sqrt{\bar{x} x}}\right)^{s^{-1}} \cdot  
\left(\frac{\bar{i} x + i \bar{x}}{\sqrt{\bar{x} x}}\right)^{q-s^{-1}} \right) \\
& =  &  
tr \big(  (\bar{i} x + i \bar{x})^{s^{-1}} \cdot (x + \bar{x})^{q-s^{-1}}  \big), 
\end{eqnarray*} 
which completes the proof. 
\end{proof}

\begin{remark} 
In place of the oval $\mathcal{E}(h)$ one can consider the oval 
$\mathcal{E}'(h)= \{( t, h(t), 1) \mid t\in F\} \cup \{ (1,0,0) \} $ with nucleus $(0,1,0)$. 
Then corresponding $g$-function can be written as   
$g(u) = h \left(\frac{ \langle i, u \rangle}{ \langle 1, u\rangle} \right) \langle 1, u\rangle+  \langle i, u\rangle$, 
$g(1)= 1$. 
However this approach is less convenient for our reasonings in  Section  \ref{Niho-ovals}. 
\end{remark}

Next we derive formulas for Niho bent functions and $g$-functions obtained from ovals in $K$.

\begin{theorem}
\label{thm:oval-f} 
Let $\mathcal{O}$ be an oval in $K$ whose  nucleus is the  origin. Then the associated Niho bent function is 
$$ f(x)=\sum_{j=0}^{m-1} \sum_{i=0}^{q}  \left(\sum_{v\in {\mathcal O}} \frac{1}{v^{i(q-1)+2^j}} \right)x^{i(q-1)+2^j}.$$
\end{theorem}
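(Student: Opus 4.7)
The plan is to invoke Theorem \ref{thm:main} to pass from the oval $\mathcal{O}$ to its associated $g$-function, and then convert the ``polar'' formula $f(\lambda u) = tr(\lambda g(u))$ into a univariate polynomial in $x$. By Theorem \ref{thm:main}, there is a unique $g$-function with $\mathcal{O} = \{u/g(u) : u \in S\}$; since $\mathcal{O}$ lies entirely in the affine plane $K$, $g$ does not vanish on $S$, so every point $v = u/g(u)$ of $\mathcal{O}$ is nonzero and the quantities $1/v^{i(q-1)+2^j}$ are well-defined.

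For $x = \lambda u$ with $\lambda \in F^*$ and $u \in S$, I would first expand the trace as a sum of $2^j$-th powers and peel off $x^{2^j}$ via the Frobenius identity $x^{2^j} = \lambda^{2^j} u^{2^j}$:
$$f(x) = \sum_{j=0}^{m-1} \lambda^{2^j} g(u)^{2^j} = \sum_{j=0}^{m-1} x^{2^j} \cdot \frac{g(u)^{2^j}}{u^{2^j}}.$$
The core of the proof is then to expand each function $\psi_j(u) := g(u)^{2^j}/u^{2^j}$ on the cyclic group $S$ of order $q+1$ in the monomial basis $\{u^{i(q-1)} : 0 \le i \le q\}$. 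This is a basis because $\gcd(q-1,q+1)=1$ (as $q+1$ is odd), and a discrete Fourier transform on $S$ recovers the coefficients. Since $|S| = q+1$ is invertible in $K$ (it reduces to $1$ mod $2$), character orthogonality $\sum_{u \in S} u^r = \delta_{r \equiv 0}$ yields
$$\psi_j(u) = \sum_{i=0}^{q} c_{j,i}\, u^{i(q-1)}, \qquad c_{j,i} = \sum_{w \in S} \frac{g(w)^{2^j}}{w^{i(q-1)+2^j}}.$$

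The last step is twofold. First, for $x = \lambda u$ with $\lambda \in F^*$ one has $x^{q-1} = \lambda^{q-1}u^{q-1} = u^{q-1}$, so $u^{i(q-1)} = x^{i(q-1)}$ and the expansion becomes $f(x) = \sum_{j,i} c_{j,i}\, x^{i(q-1)+2^j}$. Second, to rewrite $c_{j,i}$ as a sum over the oval, I would set $v = u/g(u) \in \mathcal{O}$ and use the identity $g(u)^{q-1} = 1$ (valid because $g(u) \in F^*$) to obtain
$$\frac{1}{v^{i(q-1)+2^j}} = \frac{g(u)^{i(q-1)+2^j}}{u^{i(q-1)+2^j}} = \frac{g(u)^{2^j}}{u^{i(q-1)+2^j}},$$
identifying $c_{j,i} = \sum_{v \in \mathcal{O}} 1/v^{i(q-1)+2^j}$. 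The boundary case $x=0$ holds because $f(0)=0$ while every exponent $i(q-1)+2^j$ is positive. The main technical point is the Fourier inversion on $S$; once that is set up, the remainder is bookkeeping built around the two crucial cancellations $\lambda^{q-1}=1$ and $g(u)^{q-1}=1$.
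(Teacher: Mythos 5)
Your proof is correct, but it takes a genuinely different route from the paper's. The paper never touches the polar form: it starts from the closed univariate expression $f(x)=\sum_{v\in\mathcal O}\bigl[(x^{q^2-q}+v^{q^2-q})^{q^2-1}+1\bigr]\sum_{j=0}^{m-1}(x/v)^{2^j}$ of \cite[Theorem 3.8]{Ab2017}, rewrites the bracket as $\bigl(x^{q-1}+v^{q-1}\bigr)^{q^2-1}+1$, expands the power and uses the periodicity $(x^{q-1})^{k(q+1)+i}=(x^{q-1})^{i}$ to collapse it to $\sum_{i=0}^{q}\bigl((x/v)^{q-1}\bigr)^{i}$, and then collects terms. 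You instead begin with $f(\lambda u)=tr(\lambda g(u))$ and carry out Fourier inversion on the cyclic group $S$ of odd order $q+1$ to expand $g(u)^{2^j}/u^{2^j}$ in the basis $\{u^{i(q-1)}\}_{i=0}^{q}$, passing to a sum over the oval only at the very end via $g(u)^{q-1}=1$ and $\lambda^{q-1}=1$. Both arguments ultimately rest on the same orthogonality relation $\sum_{u\in S}u^{r}$ (which the paper invokes explicitly in the proof of Theorem \ref{thm:g-function}), but yours is self-contained, whereas the paper's is shorter because it leans on the external formula from \cite{Ab2017}; your version also makes visible where the nonvanishing of $g$ and the hypothesis $\mathcal O\subset K$ enter. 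One minor remark: the existence and uniqueness of the parametrization $\mathcal O=\{u/g(u):u\in S\}$ with $g$ nowhere zero is more directly a consequence of the origin being the nucleus (each spread line $uF$ is tangent, hence meets $\mathcal O$ in exactly one nonzero point) than of Theorem \ref{thm:main} itself, which goes in the other direction; this is how the paper frames it, citing \cite{Ab2019}.
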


\begin{proof} 
By  \cite[Theorem 3.8]{Ab2017} we have 
$$f(x)= \sum_{v\in {\mathcal O}} \Big[(x^{q^2-q} + v^{q^2-q})^{q^2-1} +1\Big] \sum_{j=0}^{m-1} (x/v)^{2^j}.$$
It is clear  that $f(0)=0$. Let $x\ne 0$.  Then 
\begin{eqnarray*}
f(x)  
& =  & 
\sum_{v\in {\mathcal O}} \Big[\Big(\frac{1}{x^{q-1}} + \frac{1}{v^{q-1}}\Big)^{q^2-1} +1\Big] \sum_{j=0}^{m-1} (x/v)^{2^j} \\
& =  & 
\sum_{v\in {\mathcal O}} \Big[\Big(\frac{x^{q-1} + v^{q-1}}{x^{q-1}v^{q-1}}\Big)^{q^2-1} +1\Big] 
\sum_{j=0}^{m-1} (x/v)^{2^j} \\
& =  & 
\sum_{v\in {\mathcal O}} \Big[\big(x^{q-1} + v^{q-1}\big)^{q^2-1} +1\Big] \sum_{j=0}^{m-1} (x/v)^{2^j} \\
& =  & 
\sum_{v\in {\mathcal O}} \Big[\sum_{i=0}^{q^2-1}((x^{q-1})^i  (v^{q-1})^{q^2-1-i} +1\Big] \sum_{j=0}^{m-1} (x/v)^{2^j}. 
\end{eqnarray*}
We note that some terms of the second sum are the same:
$$(x^{q-1})^{k(q+1)+i} = (x^{q-1})^{i}, \quad (v^{q-1})^{k(q+1)+i} = (v^{q-1})^{i}.$$
Therefore, 
\begin{eqnarray*}
f(x)  
& =  & 
\sum_{v\in {\mathcal O}} \Big[\sum_{i=0}^{q^2-1}((x^{q-1})^i  (v^{q-1})^{q^2-1-i} +1\Big] \sum_{j=0}^{m-1} (x/v)^{2^j} \\
& =  & 
\sum_{v\in {\mathcal O}} \Big[\sum_{i=0}^{q}((x^{q-1})^i  (v^{q-1})^{q+1-i} \Big] \sum_{j=0}^{m-1} (x/v)^{2^j} \\
& =  & 
\sum_{v\in {\mathcal O}} \sum_{i=0}^{q}(((x/v)^{q-1})^i   \sum_{j=0}^{m-1} (x/v)^{2^j} \\
& =  & 
\sum_{j=0}^{m-1} \sum_{i=0}^{q}   \left(\sum_{v\in {\mathcal O}} \frac{1}{v^{i(q-1)+2^j}} \right)x^{i(q-1)+2^j} ,  
\end{eqnarray*} 
which establishes the formula. 
\end{proof} 

\begin{theorem}
\label{thm:g-function} 
Let $\mathcal{O}= \left\{ \frac{u}{g(u)} \mid u \in S\right\}$ be an oval in $K$ whose nucleus is the origin. Then 
$$ g(u)= \sum_{i=0}^{q}  \sum_{v\in {\mathcal O}} v^{(q-1)i/2 -1}u^{i+1}.$$
\end{theorem}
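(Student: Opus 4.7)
The plan is to verify the claimed identity by substituting the parametrization $v = u'/g(u')$, $u'\in S$, into the right-hand side and collapsing the resulting double sum by orthogonality on the cyclic group $S$.

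The first step is to interpret and simplify the exponent $v^{(q-1)/2}$. Since $g(u')\in F^*$, $g(u')^{q-1}=1$, and since $u'\in S$, $u'^{q-1}=u'^{-2}$; hence $v^{q-1}=u'^{-2}$. Because squaring is a bijection on $K^*$ in characteristic $2$, the element $v^{(q-1)/2}$ is unambiguously the unique square root of $v^{q-1}$ in $K^*$, namely $u'^{-1}$. It follows that $v^{(q-1)i/2-1}=u'^{-i}\cdot g(u')/u'=g(u')\,u'^{-(i+1)}$.

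Substituting into the right-hand side of the claimed identity yields
\begin{equation*}
\sum_{i=0}^{q}\sum_{v\in\mathcal{O}} v^{(q-1)i/2-1}\,u^{i+1}
=\sum_{u'\in S} g(u')\sum_{i=0}^{q}\left(\frac{u}{u'}\right)^{i+1}.
\end{equation*}
Setting $w=u/u'\in S$, the inner geometric sum equals $q+1\equiv 1\pmod 2$ when $w=1$ and $(w^{q+1}-1)/(w-1)=0$ otherwise, since $w^{q+1}=1$. Only the term $u'=u$ survives, producing $g(u)$ as required.

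To motivate the formula rather than merely verify it, one could start from Theorem \ref{thm:oval-f}, substitute $x=\lambda u$ with $\lambda\in F^*$, simplify using $\lambda^{q-1}=1$ and $u^{q-1}=u^{-2}$, and match the coefficient of $\lambda$ in $f(\lambda u)$ against the $j=0$ term of $tr(\lambda g(u))=\sum_{j=0}^{m-1}\lambda^{2^j}g(u)^{2^j}$ by uniqueness of polynomial representation. The main technical obstacle throughout is correctly handling the fractional exponent $(q-1)/2$; this is where characteristic $2$ is essential, via the bijectivity of squaring on $K^*$.
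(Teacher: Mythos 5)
Your proof is correct and follows essentially the same route as the paper's: both substitute the parametrization $v=u'/g(u')$, identify $v^{(q-1)/2}$ with $u'^{-1}=\bar{u'}$, and collapse the double sum via the geometric-series orthogonality on the cyclic group $S$ of odd order $q+1$. Your explicit remark on the well-definedness of the half-exponent via bijectivity of squaring in characteristic $2$ is a welcome clarification, but the argument is the same.
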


\begin{proof} 
If $v \in \mathcal{O}$ then  $v =z/g(z)$ for some $z\in S$.  Then  we have 
$$v^{(q-1)/2} =  (z/g(z))^{(q-1)/2} =  z^{(q-1)/2}= \bar{z}.$$
Now fix $u \in S$.  If $v \in \mathcal{O}$, $v =z/g(z)$ and $z = u$ then 
$$\sum_{i=0}^q (u v^{(q-1)/2}))^i \ \frac{u}{v} = \sum_{i=0}^q (u \bar{z})^i \ \frac{u}{v} = \frac{u}{v} = g(u).$$
If $v \in \mathcal{O}$, $v =z/g(z)$ and $z \ne u$ then 
$$\sum_{i=0}^q (u v^{(q-1)/2}))^i \ \frac{u}{v} = \sum_{i=0}^q (u \bar{z})^i \ \frac{u}{v} = 0.$$

Therefore, 
\begin{eqnarray*}
g(u)  
& =  & 
\sum_{v\in {\mathcal O}} \sum_{i=0}^q (u v^{(q-1)/2}))^i \ \frac{u}{v} \\
& =  & 
\sum_{i=0}^{q}  \sum_{v\in {\mathcal O}} v^{(q-1)i/2 -1}u^{i+1},  
\end{eqnarray*} 
which proves the theorem. 
\end{proof} 

Given a hyperoval in a projective plane $PG(2,q)$, an oval can be obtained by deleting one of the points 
of the hyperoval. 
This deleted point is the nucleus of the resulting oval. There are $q+2$ ovals which can be obtained in this way, 
but some of them will be equivalent under the action of the automorphism group $P\Gamma L(3,q)$ of  $PG(2,q)$. 
If $P$ and $Q$ are points of a hyperoval $\mathcal{H}$, then ovals 
$\mathcal{H} \setminus \{ P\}$ and $\mathcal{H} \setminus \{ Q\}$ are equivalent if and only if 
$P$ and $Q$ lie in the same orbit of the stabilizer of $\mathcal{H}$ on $\mathcal{H}$. 
(The stabilizer of $\mathcal{H}$ in $P\Gamma L(3,q)$ is also called automorphism group of $\mathcal{H}$.) 
Therefore, the number of projectively inequivalent ovals obtained from hyperoval ${\mathcal H}$  is equal 
to the number of orbits of ${\mathcal H}$ under the action of the stabilizer of ${\mathcal H}$.

Assume that $g(u) \ne 0$ for all $u\in S$.  Function $g(u)$ determines an hyperoval  
$$\mathcal{H}= \left\{ \frac{u}{g(u)} \mid u \in S\right\}\cup \{ 0\}$$ 
in $K$. 
Consider an oval $\mathcal{H} \setminus \{ s/g(s)\}$, $s\in S$, whose nucleus is  $s/g(s)$, and shift it 
by the element $s/g(s)$ in order to get an oval $\mathcal{O}_s$, whose nucleus is the origin: 
$$\mathcal{O}_s = \left\{ \frac{v}{g(v)} + \frac{s}{g(s)} \mid v \in S, v \ne s \right\} \cup \left\{ \frac{s}{g(s)} \right\}.$$
Any Niho bent function associated with the hyperoval $\mathcal{H}$ is equivalent to a Niho bent function obtained 
from one of the ovals $\mathcal{O}_s$,  and two such bent functions $f_s(x)$ and $f_t(x)$ are equivalent 
\cite{Ab2017,Penttila2014}  if and only if the points 
$s/g(s)$ and $t/g(t)$ are in the same orbit under action of automorphism group of the hyperoval $\mathcal{H}$.   

\begin{theorem}
\label{thm:g-neighbor} 
Let $\mathcal{H}$ be a hyperoval in $K$ defined by a function $g(u)$. 
Then the function $g_s(u)$ associated with the oval $\mathcal{O}_s$, whose nucleus is the origin, is equal to 
$$g_s(u)= \sum_{i=0}^{q}  a_i u^{i+1},$$
$$a_i = g(s) s^{(q-1)i/2-1} + g(s) \sum_{v\in S, v \ne s} g(v)\big(g(s)v+sg(v)\big)^{(q-1)i/2-1}. $$
\end{theorem}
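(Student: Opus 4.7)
The plan is to apply Theorem \ref{thm:g-function} directly to the oval $\mathcal{O}_s$. By construction the nucleus of $\mathcal{O}_s$ is the origin, so the theorem yields
$$g_s(u) = \sum_{i=0}^{q} a_i\, u^{i+1}, \qquad a_i = \sum_{w \in \mathcal{O}_s} w^{(q-1)i/2-1},$$
and it remains to compute each $a_i$ by summing over the explicit parametrization of $\mathcal{O}_s$.

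First I would split $a_i$ according to the decomposition
$$\mathcal{O}_s = \{\, s/g(s)\,\} \cup \{\, v/g(v) + s/g(s) : v\in S,\ v\ne s\,\}.$$
For the isolated point $w = s/g(s)$, I would use $s\in S$ together with $g(s)\in F^*$ (so $g(s)^{q-1}=1$) to obtain $w^{q-1} = s^{q-1}$, and then $w^{(q-1)i/2-1} = w^{(q-1)i/2}\cdot w^{-1} = g(s)\, s^{(q-1)i/2-1}$. This reproduces the first summand of $a_i$.

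For the remaining terms I would rewrite
$$w = \frac{v}{g(v)} + \frac{s}{g(s)} = \frac{g(s)v + s\, g(v)}{g(s)\,g(v)}$$
and apply the same principle: the denominator lies in $F^*$ and so becomes invisible when we pass to the $(q-1)$-st power. Setting $A_v := g(s)v + s\,g(v)$ and $B_v := g(s)g(v)$, one gets $w^{q-1} = A_v^{q-1}$, hence $w^{(q-1)i/2-1} = B_v\cdot A_v^{(q-1)i/2-1} = g(s)g(v)\,(g(s)v + s\,g(v))^{(q-1)i/2-1}$. Summing over $v \in S\setminus\{s\}$ and pulling $g(s)$ out of the sum produces the second summand in the claimed formula.

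The only subtlety is the interpretation of the exponent $(q-1)i/2$: because $q = 2^m$ makes $q - 1$ odd, this is not literally an integer, but it is well-defined thanks to the bijectivity of squaring in characteristic two, denoting the unique square root of $w^{(q-1)i}$. I would keep this convention consistent with the one used in the proof of Theorem \ref{thm:g-function} throughout. Once that convention is fixed, the whole argument is a short bookkeeping calculation, with no new combinatorial input beyond Theorem \ref{thm:g-function}.
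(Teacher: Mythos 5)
Your proposal is correct and follows essentially the same route as the paper: apply Theorem \ref{thm:g-function} to $\mathcal{O}_s$, split the coefficient sum into the isolated point $s/g(s)$ and the shifted points, and use $\lambda^{q-1}=1$ for $\lambda\in F^*$ to absorb the denominators. Your explicit remark on the meaning of the exponent $(q-1)i/2$ is a minor clarification the paper leaves implicit, but the argument is the same.
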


\begin{proof} 
From Theorem \ref{thm:g-function} we have 
\begin{eqnarray*}
a_i  
& =  & 
\sum_{z\in {\mathcal O}_s} z^{(q-1)i/2 -1} \\
& =  & 
 \left( \frac{s}{g(s)}\right)^{(q-1)i/2 -1} + \sum_{v\in S, v \ne s} \left(\frac{v}{g(v)} + \frac{s}{g(s)}\right)^{(q-1)i/2 -1}  \\
& =  & 
\left( \frac{s}{g(s)}\right)^{(q-1)i/2 -1} + \sum_{v\in S, v \ne s} \left(\frac{g(s)v+sg(v)}{g(s)g(v)} \right)^{(q-1)i/2 -1}  \\
& =  & 
g(s) s^{(q-1)i/2-1} + g(s) \sum_{v\in S, v \ne s} g(v)\big(g(s)v+sg(v)\big)^{(q-1)i/2-1},  
\end{eqnarray*} 
which is our claim. 
\end{proof}

\begin{theorem}
\label{thm:f-neighbor} 
Let $\mathcal{H}$ be a hyperoval in $K$ defined by a function $g(u)$.  
Then the Niho bent function $f(x)$ associated with the oval $\mathcal{O}_s$, whose nucleus is the origin, is equal to 
$$ f_s(x)=\sum_{j=0}^{m-1} \sum_{i=0}^{q}   \left( \frac{g(s)^{2^j}}{s^{i(q-1)+2^j}}  + 
\sum_{v\in S, v\ne s } \frac{g(s)^{2^j} g(v)^{2^j}}{(g(s)v +sg(v))^{i(q-1)+2^j}} \right)x^{i(q-1)+2^j}.$$
\end{theorem}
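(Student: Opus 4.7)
The plan is to invoke Theorem \ref{thm:oval-f} with $\mathcal{O} = \mathcal{O}_s$, split the sum over points of $\mathcal{O}_s$ into the singleton contribution from $s/g(s)$ and the $q$-element contribution from the translates $v/g(v) + s/g(s)$, and then exploit the fact that $g(u) \in F^*$ (so $g(u)^{q-1} = 1$) to simplify each term.

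More concretely, first I would recall that
$$\mathcal{O}_s = \left\{ \frac{s}{g(s)} \right\} \cup \left\{ \frac{v}{g(v)} + \frac{s}{g(s)} : v \in S, v \ne s \right\}$$
and by Theorem \ref{thm:oval-f} the associated Niho bent function is
$$f_s(x) = \sum_{j=0}^{m-1} \sum_{i=0}^{q} \left( \sum_{z \in \mathcal{O}_s} \frac{1}{z^{i(q-1)+2^j}} \right) x^{i(q-1)+2^j}.$$
So the task reduces to evaluating the inner sum $\sum_{z \in \mathcal{O}_s} z^{-(i(q-1)+2^j)}$ for each pair $(i,j)$.

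Second, I would compute the two types of contributions separately. For the singleton point $s/g(s)$, we obtain $g(s)^{i(q-1)+2^j} / s^{i(q-1)+2^j}$. For a translated point $v/g(v) + s/g(s) = (g(s)v + sg(v))/(g(s)g(v))$ with $v \ne s$, we obtain $(g(s)g(v))^{i(q-1)+2^j} / (g(s)v+sg(v))^{i(q-1)+2^j}$. The crucial simplification is that $g(s), g(v)$ lie in $F^*$, hence $g(s)^{q-1} = g(v)^{q-1} = 1$; this collapses the exponent $i(q-1) + 2^j$ in the numerators to merely $2^j$, giving $g(s)^{2^j}/s^{i(q-1)+2^j}$ and $g(s)^{2^j} g(v)^{2^j}/(g(s)v + sg(v))^{i(q-1)+2^j}$ respectively.

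Finally, summing these contributions and substituting back into the formula from Theorem \ref{thm:oval-f} yields exactly the stated expression for $f_s(x)$. There is no real obstacle here; the argument is a direct specialization of Theorem \ref{thm:oval-f} combined with the observation that powers of $q-1$ are trivial on $F^*$. The only point requiring care is tracking that every $g$-value appearing in the common denominators/numerators lies in $F$, so that the $(q-1)i$ part of the exponent disappears uniformly and cleanly, leaving the formula in the form asserted.
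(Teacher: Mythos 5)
Your proposal is correct and follows essentially the same route as the paper: apply Theorem \ref{thm:oval-f} to $\mathcal{O}_s$, split the inner sum into the contribution of $s/g(s)$ and of the $q$ translated points $(g(s)v+sg(v))/(g(s)g(v))$, and use $g(s)^{q-1}=g(v)^{q-1}=1$ (valid since $g$ takes values in $F^*$ under the standing assumption $g(u)\ne 0$) to reduce the numerator exponents to $2^j$. No gaps.
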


\begin{proof} 
From Theorem \ref{thm:oval-f} we have 
\begin{eqnarray*}
f_s(x) 
& =  & 
\sum_{j=0}^{m-1} \sum_{i=0}^{q}  \left(\sum_{z\in {\mathcal O}} \frac{1}{z^{i(q-1)+2^j}} \right)x^{i(q-1)+2^j} \\
& =  & 
\sum_{j=0}^{m-1} \sum_{i=0}^{q}  \left( \left( \frac{s}{g(s)}\right)^{-i(q-1)-2^j} + 
\sum_{v\in S, v \ne s} \left( \frac{v}{g(v)} + \frac{s}{g(s)} \right)^{-i(q-1)-2^j} \right)x^{i(q-1)+2^j} \\ 
& =  & 
\sum_{j=0}^{m-1} \sum_{i=0}^{q}  \left( \left( \frac{g(s)}{s}\right)^{i(q-1)+2^j} + 
\sum_{v\in S, v \ne s} \left( \frac{g(s)g(v)}{g(s)v+sg(v)}  \right)^{i(q-1)+2^j} \right)x^{i(q-1)+2^j} \\
& =  & 
\sum_{j=0}^{m-1} \sum_{i=0}^{q}   \left( \frac{g(s)^{2^j}}{s^{i(q-1)+2^j} }+ 
\sum_{v\in S, v \ne s}  \frac{g(s)^{2^j}g(v)^{2^j}}{(g(s)v+sg(v))^{i(q-1)+2^j}}   \right)x^{i(q-1)+2^j},  
\end{eqnarray*} 
which completes the proof. 
\end{proof} 

Theorems \ref{thm:g-neighbor} and \ref{thm:f-neighbor} allow us to find all possible $g$-functions 
and Niho bent functions associated with a fixed hyperoval. 
For small values of $m$ Magma \cite{Bosma} can produce explicit list of such functions. 

\section{Equivalence classes of Niho bent functions}  
\label{Niho-ovals} 

In this section we describe the equivalence classes of Niho bent functions for all known hyperovals. 
If $h(t)$ is an o-polynomial then its corresponding hyperoval is 
\begin{eqnarray} 
\label{D(h)} 
{\mathcal D} (h) = \{  (t:h(t):1) \mid t \in F\} \cup \{ (0:1:0), (1:0:0) \}. 
\end{eqnarray}
Hyperoval ${\mathcal D} (h)$ contains all points of the fundamental quadrangle $\{ X, Y, Z, W\}$, where 
$Y=(1:0:0)$, $Z=(0:1:0)$, $X=(0:0:1)$ and $W=(1:1:1)$. Conversely, any hyperoval containing the 
 fundamental quadrangle can be written in the form (\ref{D(h)}). 
With this hyperoval we associate the oval 
$${\mathcal E} (h) = \{  (t:h(t):1) \mid t \in F\} \cup \{ (0:1:0) \},$$
whose nucleus is the point $Y=(1:0:0)$. 

To obtain each of the ovals contained in a given hyperoval $\mathcal{H}$, we choose a point of each orbit 
of the stabilizer of  $\mathcal{H}$, and map this point to the point $Y=(1:0:0)$, ensuring that the resulting image of 
$\mathcal{H}$ contains the fundamental quadrangle. The image of $\mathcal{H}$ can be written as 
$\mathcal{H}' = \mathcal{D}(h')$ and the corresponding oval is 
${\mathcal E} (h') = \{  (t:h'(t):1) \mid t \in F\} \cup \{ (0:1:0) \}$ and has nucleus $Y=(1:0:0)$ as required. 

Each permutation of the points $\{ X, Y, Z, W\}$ defines a collineation of $PG(2,q)$. These 24 maps were 
considered in \cite{Cher1988}. In particular, let us consider the maps \cite{OKeefe1990}, defined by 
\begin{eqnarray*}
\pi_1 \big( (a:b:c) \big) & = & (b:a:c),   \\ 
\pi_2 \big( (a:b:c) \big) & = & (c:b:a),  \\ 
\pi_3 \big( (a:b:c) \big) & = & (a: (a+b):(a+c)).  
 \end{eqnarray*} 
They map hyperoval ${\mathcal D} (h)$ to the equivalent hyperoval ${\mathcal D} (h_i)$, where 
\begin{eqnarray*} 
h_1(t) & = & h^{-1}(t),  \\
h_2(t) & = & t \ h(1/t),  \quad h(0)=0, \\ 
h_3(t) & = & t + (t+1) h(t/(t+1)), \quad  h(1)=1 .   
 \end{eqnarray*} 
Note that $\pi_1(Z) = Y$, $\pi_2(X) = Y$ and $\pi_3(W) = Y$. 
Therefore, $\pi_1$ maps the oval  ${\mathcal D} (h) \setminus \{ Z\}$ to the equivalent oval ${\mathcal E} (h_1)$, 
$\pi_2$ maps the oval  ${\mathcal D} (h) \setminus \{ X\}$ to the equivalent oval ${\mathcal E} (h_2)$ and 
$\pi_3$ maps the oval  ${\mathcal D} (h) \setminus \{ W\}$ to the equivalent oval ${\mathcal E} (h_3)$.

\subsection{Niho bent functions associated with the hyperconic}
\label{hyperconic}

In \cite{Leander2006} there were introduced Niho bent functions of the form
$$f_r(x)=Tr \left(a x^{2^m+1} + \sum_{i=1}^{2^{r-1}-1} x^{d_i} \right),$$
where $1<r<m$, $\gcd(r,m)=1$, $2^r d_i=(2^m-1)i+2^r$ and $a\in K$, $a+\bar{a}= 1$. 
It was shown in \cite{Buda2012} that $f_r(x)$ is associated with the translation hyperoval determined by the 
o-polynomial $h(t)=t^{2^{m-r}}$. Recall that the o-polynomials $h(t)=t^{2^r}$ and $h(t)=t^{2^{m-r}}$ determine 
equivalent hyperovals \cite{Hir}. 
To unify the notations, we introduce 
$$f_1(x)= Tr(a x^{2^m+1}).$$ 

\begin{proposition}
\label{prop:f-transl} 
Let $f_r(x) = tr(\lambda g_r(u))$, where $x=\lambda u$, $\lambda \in F$, $u\in S$.  Then 
$$g_1(u)=1,$$  
$$g_r(u) =  (u u^{2^{m-r}} + \bar{u}\bar{u}^{2^{m-r}}) / (u^{2^{m-r}} + \bar{u}^{2^{m-r}}), \quad g_r(1)=1  $$
for $r>1$. 
\end{proposition}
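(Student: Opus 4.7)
My plan is to compute $f_r(x)$ directly in polar coordinates $x = \lambda u$ and match it against the claimed formula. Using $Tr = tr \circ T$ together with $\overline{y^k} = \bar y^k$, I will write $Tr(x^k) = tr\bigl(\lambda^k(u^k + \bar u^k)\bigr)$, reducing $\lambda$-exponents modulo $q-1$ and $u$-exponents modulo $q+1$.

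The case $r = 1$ is immediate: $x^{q+1} = N(x) = \lambda^2$, so $Tr(a x^{q+1}) = tr(T(a)\lambda^2) = tr(\lambda^2) = tr(\lambda)$ by $T(a)=1$, giving $g_1(u)=1$. For $r > 1$, I will analyze each Niho exponent $d_i$ via $2^r d_i \equiv (q-1)i + 2^r$. Modulo $q-1$ this forces $d_i \equiv 1$, so $\lambda^{d_i} = \lambda$. Modulo $q+1$, using $q-1 \equiv -2$ and $2^m \equiv -1 \pmod{q+1}$ (which shows $(2^r)^{-1} \equiv -2^{m-r} = -s$ with $s = 2^{m-r}$), I will obtain $d_i \equiv 1 + 2is \pmod{q+1}$. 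Setting $A := u^s$, it follows that $u^{d_i} = uA^{2i}$ and $\bar u^{d_i} = \bar u\,\bar A^{2i}$, and the geometric sum yields
$$f_r(x) = tr\!\left(\lambda\Bigl(1 + uP(A) + \bar u P(\bar A)\Bigr)\right), \qquad P(A) := \sum_{i=1}^{2^{r-1}-1} A^{2i} = \frac{A^{2^r} + A^2}{A^2 + 1}.$$

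To finish, I need to verify the identity $1 + uP(A) + \bar u P(\bar A) = (uA + \bar u \bar A)/(A + \bar A)$. I will multiply both sides by $A + \bar A = (A^2+1)/A$, which collapses $P(A)(A+\bar A)$ to $A^{2^r-1} + A$ and $P(\bar A)(A+\bar A)$ to $\bar A^{2^r-1} + \bar A$. The crucial identity is $A^{2^r} = u^{s\cdot 2^r} = u^{2^m} = \bar u$, whence $uA^{2^r-1} = u\bar u\bar A = \bar A$ and symmetrically $\bar u\bar A^{2^r-1} = A$. Substituting, the left-hand side becomes $A + \bar A + uA + \bar u\bar A + \bar A + A$, and the two copies of $A + \bar A$ cancel in characteristic 2, leaving exactly $uA + \bar u\bar A$, as required. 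The convention $g_r(1)=1$ is consistent since at $u=1$ one has $P(1) = 2^{r-1}-1 \equiv 1 \pmod 2$, so the polar expression evaluates to $1 + 2P(1) = 1$.

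The main obstacle I anticipate is the final algebraic step: working with $P(A)$ directly tangles the rational expressions, but multiplying through by $A + \bar A$ first reduces everything to the single clean identity $A^{2^r} = \bar u$. Once that reformulation is spotted, the rest is mere cancellation in characteristic two.
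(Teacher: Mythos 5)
Your argument is correct and matches the paper's proof in all essentials: both reduce the Niho exponents $d_i$ modulo $q-1$ and modulo $q+1$ to obtain $f_r(\lambda u)=tr\bigl(\lambda(1+\sum_i u^{d_i}+\sum_i\bar u^{d_i})\bigr)$ and then sum the resulting geometric series. Your closing step---clearing the denominator $A+\bar A$ and invoking $A^{2^r}=u^{2^m}=\bar u$---is just a tidier packaging of the same chain of cancellations the paper carries out using $2^{-r}\equiv-2^{m-r}\pmod{q+1}$ and $\bar u=u^{-1}$.
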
 

\begin{proof} 
For $f_1(x)$ we clearly have $g_1(u)=1$. If $r>1$ then we have 

$$f_r(\lambda u)=Tr \left(a \lambda^2 +  \sum_{i=1}^{2^{r-1}-1} \lambda u^{d_i}\right)= 
tr \left(\lambda \left(1+ \sum_{i=1}^{2^{r-1}-1} u^{d_i}+ \sum_{i=1}^{2^{r-1}-1} \bar{u}^{d_i}\right)\right).$$
Therefore,  for $u\not= 1$ we have 
\begin{eqnarray*}
g_r(u)
&=& 
1+ \sum_{i=1}^{2^{r-1}-1} u^{d_i}+ \sum_{i=1}^{2^{r-1}-1} \bar{u}^{d_i} \\
&=& 
1+ \sum_{i=1}^{2^{r-1}-1} u^{-2i/2^r +1}+\sum_{i=1}^{2^{r-1}-1} \bar{u}^{-2i/2^r +1}\\
&=& 
1+ u^{-2/2^r +1}\cdot \frac{1-(u^{-2/2^r})^{2^{r-1}-1}}{1-u^{-2/2^r}} +
\bar{u}^{-2/2^r +1}\cdot \frac{1-(\bar{u}^{-2/2^r})^{2^{r-1}-1}}{1-\bar{u}^{-2/2^r}}  \\
&=& 
1+ \frac{u^{-2/2^r +1}-1}{1-u^{-2/2^r}} + \frac{\bar{u}^{-2/2^r +1}-1}{1-\bar{u}^{-2/2^r}} \\
&=& 
1+ \frac{\bar{u}^{2/2^r -1}-1}{1-\bar{u}^{2/2^r}} + \frac{u^{2/2^r -1}-1}{1- u^{2/2^r}} \\
&=& 
\frac{u + \bar{u} + u \bar{u}^{2/2^r} + \bar{u}u^{2/2^r}}{u^{2/2^r} + \bar{u}^{2/2^r}} \\
&=& 
\frac{(u^{2^{-r}} + \bar{u}^{2^{-r}} ) ( u \bar{u}^{2^{-r}} + \bar{u}u^{2^{-r}})} {u^{2^{1-r}} + \bar{u}^{2^{1-r}}} \\
&=& 
\frac{u \bar{u}^{2^{-r}} + \bar{u}u^{2^{-r}}} {u^{2^{-r}} + \bar{u}^{2^{-r}}} \\
&=& 
\frac{u u^{2^{m-r}} + \bar{u}\bar{u}^{2^{m-r}}} {u^{2^{m-r}} + \bar{u}^{2^{m-r}}} . 
\end{eqnarray*}
Clearly,   $g_r(1)=1$. 
\end{proof} 
 
In particular, 
$$g_{m-1} (u)= \frac{u u^2 + \bar{u}\bar{u}^2} {u^2 + \bar{u}^2} =  \frac{u^3 + \bar{u}^3} {u^2 + \bar{u}^2} = 
 \frac{1+ u^2 + \bar{u}^2} {u+ \bar{u}} =  \frac{1} {u+ \bar{u}} + u+ \bar{u}.$$

The hyperconic is given by the function $g(u)=1$: 
$$\mathcal{H} = \{ u \mid u \in S\} \cup \{ 0 \}. $$ 

\begin{theorem}
\label{thm:hyperconic} 
Let $m \ge 3$. Then there are exactly two equivalence classes of Niho bent functions associated with the hyperconic. 
Their representatives are the functions $f_1(x)$ and $f_{m-1}(x)$, and corresponding $g$-function representatives 
are $g_1(u)$ and $g_{m-1}(u)$. 
\end{theorem}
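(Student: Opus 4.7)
The plan is to reduce the classification to determining the orbits of the collineation stabilizer of the hyperconic $\mathcal{H}=S\cup\{0\}$ on its own points, and then to exhibit one Niho bent function per orbit. By the correspondence recalled immediately before Theorem \ref{thm:g-neighbor}, two Niho bent functions $f_s$ and $f_t$ arising from $\mathcal{H}$ are equivalent if and only if the nuclei $s/g(s)$ and $t/g(t)$ (viewed as points of $\mathcal{H}$) lie in the same orbit under $\mathrm{Stab}_{P\Gamma L(3,q)}(\mathcal{H})$.

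First I would pin down this stabilizer. The point $0$ is the nucleus of the conic $S$ and is geometrically determined as the common intersection of its tangent lines, so the stabilizer of $S$ in $P\Gamma L(3,q)$ automatically fixes $0$; that stabilizer is $P\Gamma L(2,q)$ acting sharply $3$-transitively on $S$. The only way $\mathrm{Stab}(\mathcal{H})$ could be strictly larger is if some collineation swaps $0$ with a point $s\in S$. I would rule this out for $m\ge 3$ by invoking the fact that $S$ is the unique conic inside $\mathcal{H}$ when $m\ge 3$: removing a point $s\in S$ from $\mathcal{H}$ yields a translation oval associated to the o-polynomial $t\mapsto t^2$ viewed from a different vertex, and such translation ovals fail to be conics for $m\ge 3$ (a standard classical result). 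Consequently $\mathrm{Stab}(\mathcal{H})=P\Gamma L(2,q)$ has exactly two orbits on $\mathcal{H}$: the singleton $\{0\}$ and the $(q+1)$-set $S$. This immediately gives exactly two equivalence classes.

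Next I would match each orbit with an explicit representative. For the orbit $\{0\}$ the associated oval is $\mathcal{H}\setminus\{0\}=S$, already centered at the origin, so Theorem \ref{thm:main} gives $g(u)\equiv 1=g_1(u)$ and $f_1(x)=Tr(ax^{2^m+1})$. For the orbit $S$, using that $\mathcal{H}=\mathcal{D}(t^2)$ up to collineation (with $h(t)=t^2$ the o-polynomial of the hyperconic, corresponding to $r=m-1$ in Proposition \ref{prop:f-transl}), the representative bent function is $f_{m-1}$ with $g$-function $g_{m-1}$.

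The main obstacle is the verification that the stabilizer of $\mathcal{H}$ is exactly $P\Gamma L(2,q)$ rather than a larger group acting transitively on $\mathcal{H}$, which reduces to the uniqueness of $S$ as a conic inside $\mathcal{H}$ for $m\ge 3$; this is where the hypothesis $m\ge 3$ enters, since the case $q=4$ is genuinely exceptional (then every oval in $\mathcal{H}$ is a conic). With this input, the remaining steps are direct applications of Theorem \ref{thm:main} and Proposition \ref{prop:f-transl}.
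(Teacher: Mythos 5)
Your overall strategy is the same as the paper's: reduce the count of equivalence classes to the number of orbits of the stabilizer of the hyperconic on its $q+2$ points, conclude there are two, and then name a representative for each. Where the paper simply cites the two-orbit fact (Hirschfeld, O'Keefe, O'Keefe--Penttila), you re-derive it from the uniqueness of the conic inside the hyperconic for $q\ge 8$ (a pointed conic sharing $q>4$ points with the original conic would have to coincide with it); that argument is sound and correctly locates where the hypothesis $m\ge 3$ is used. (A small slip: it is $PGL(2,q)$, not $P\Gamma L(2,q)$, that acts \emph{sharply} $3$-transitively on the conic, but this does not affect the orbit count.)

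The genuine gap is in the last step, the identification of $f_{m-1}$ as the representative of the \emph{second} class. Proposition \ref{prop:f-transl} (via \cite{Buda2012}) only tells you that $f_{m-1}$ is associated with the hyperoval $\mathcal{D}(t^2)$, i.e.\ that the oval $\{u/g_{m-1}(u)\}$ completes to a hyperconic; it does not tell you which orbit the deleted point (the origin) lies in. A priori $f_{m-1}$ could land in the same class as $f_1$, in which case the pair $f_1, f_{m-1}$ would not be a set of representatives. You must show either that the oval $\{u/g_{m-1}(u)\}$ is a pointed conic rather than a conic, or equivalently that $f_1\not\sim f_{m-1}$. The paper closes this by explicitly computing, via Theorem \ref{thm:oval}, the $g$-functions of the two orbit representatives $h_0(t)=t^2$ and $h_1(t)=t^{1/2}$, obtaining $\langle i^{1/2},u\rangle+1$ and $\frac{1}{u+\bar u}+\langle \bar i^2,u\rangle$, which are equivalent (modulo the allowed linear terms $\langle c,u\rangle$) to $g_1(u)=1$ and $g_{m-1}(u)=\frac{1}{u+\bar u}+u+\bar u$ respectively. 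Alternatively you could finish by noting that $f_1(x)=Tr(ax^{2^m+1})$ is quadratic while $f_{m-1}$ has algebraic degree greater than $2$ for $m\ge 3$, and extended-affine equivalence preserves algebraic degree; either way, some argument is needed and is currently missing.
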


\begin{proof}   
Hyperconic is defined by the o-polynomial $h_0(t) = t^2$.  
The stabilizer of the hyperconic $\mathcal{H}={\mathcal D} (t^2)$ has two orbits 
\cite{Hir,OKeefe1990,OKeefe1994} on $\mathcal{H}$, 
one orbit contains the point $Y$, and the second one contains points $Z, X, W$. 
Therefore, there are exactly two inequivalent ovals $\mathcal{E}(h)$ with nucleus $Y$. 
They are defined by  the o-polynomials $h_0(t) = t^2$ and $h_1(t) = t^{1/2}$ (obtained by using the map $\pi_1$). 
Their $g$-functions by Theorem \ref{thm:oval} are  
\begin{eqnarray*}
g(u) 
&=& 
h_0^{-1} \left(\frac{ \langle i, u \rangle}{ \langle 1, u\rangle} \right) \langle 1, u\rangle = 
\left(\frac{ \langle i, u \rangle}{ \langle 1, u\rangle} \right)^{1/2} \langle 1, u\rangle \\
&=&  
(\bar{i}u + i \bar{u})^{1/2}(u+\bar{u})^{1/2} = (\bar{i} u^2 + iu^2 +1)^{1/2} =  \langle i^{1/2}, u \rangle +1, 
\end{eqnarray*} 
\begin{eqnarray*}
g'(u) 
&=& 
h_1^{-1} \left(\frac{ \langle i, u \rangle}{ \langle 1, u\rangle} \right) \langle 1, u\rangle = 
\left(\frac{ \langle i, u \rangle}{ \langle 1, u\rangle} \right)^{2} \langle 1, u\rangle \\
&=&  
\frac{\bar{i}^2u^2 + i^2 \bar{u}^2} {u+\bar{u}} = \frac{1} {u+ \bar{u}} + (\bar{i}^2 u + i^2u), 
\end{eqnarray*} 
which are equivalent to the functions $g_1(u)=1$ and $g_{m-1}(u) =  \frac{1} {u+ \bar{u}} + u+ \bar{u}$ respectively.  
Therefore, there are exactly two equivalence classes of Niho bent functions associated with the hyperconic and 
their representatives are functions $f_1(x)$ and $f_{m-1}(x)$. 
\end{proof}

 \subsection{Niho bent functions associated with the translation hyperovals}
 \label{translation}

The irregular translation hyperoval is defined by the o-polynomial $h(t)=t^{2^r}$, $1 < r <m-1$, $\gcd(r,m) \ne 1$. 
Recall that the o-polynomials $h(t)=t^{2^r}$ and $h(t)=t^{2^{m-r}}$ determine equivalent hyperovals \cite{Hir}. 

\begin{theorem}
\label{thm:translation} 
Let $m\ge 5$, $1 < r <m-1$, $\gcd(r,m) \ne 1$. 
There are exactly three equivalence classes of Niho bent functions associated with the translation hyperoval 
${\mathcal D} (t^{2^r})$. 
Their representatives are the functions $f_r(x)$,  $f_{m-r}(x)$ and  
$f(x) = tr \Big( \langle i, x \rangle^{(1-2^r)^{-1}}  { \langle 1, x\rangle}^{q-(1-2^r)^{-1}} \Big)$, 
where $(1-2^r)^{-1}$ is the inverse of $(1-2^r)$ modulo $q-1$. 
Corresponding $g$-function representatives are $g_r(u)$, $g_{m-r}(u)$ and 
$g(u) = \langle i, u \rangle^{(1-2^r)^{-1}}  { \langle 1, u\rangle}^{q-(1-2^r)^{-1}}$. 
\end{theorem}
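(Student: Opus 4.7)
The plan is to mirror the argument for the hyperconic (Theorem~\ref{thm:hyperconic}): enumerate the orbits of the stabilizer of the hyperoval $\mathcal{D}(t^{2^r})$ on itself, select a representative from each orbit, transport it to the canonical nucleus position $Y=(1:0:0)$ by one of the maps $\pi_1,\pi_2,\pi_3$, read off the resulting o-polynomial, and apply Corollary~\ref{cor:poly} to produce the corresponding $g$-function and Niho bent function.

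The key structural input is the orbit count. Under the hypotheses $1<r<m-1$ and $\gcd(r,m)\ne 1$, the stabilizer of $\mathcal{D}(t^{2^r})$ in $P\Gamma L(3,q)$ --- generated by the translations $(x,y)\mapsto(x+c,y+c^{2^r})$, the homotheties $(x,y)\mapsto(\lambda x,\lambda^{2^r}y)$, and the Frobenius action --- has exactly three orbits on the $q+2$ points of the hyperoval: the singletons $\{Y\}$ and $\{Z\}$ (the two points at infinity) and the $q$-element orbit of affine points, on which the translation subgroup acts transitively. This fact is the main obstacle: one must verify that no exceptional collineation stabilizes the hyperoval while merging these orbits. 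Such an analysis of the full collineation group is available in \cite{Hir,Cher1988,OKeefe1990}; the hypothesis $\gcd(r,m)\ne 1$ excludes the degenerate cases in which the hyperoval acquires additional symmetry.

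With the orbit picture in place, I would choose representatives $Y$, $Z$, and $X=(0:0:1)$. Leaving $Y$ in place gives the oval $\mathcal{E}(h_0)$ with $h_0(t)=t^{2^r}$; applying $\pi_1$ after removing $Z$ gives $\mathcal{E}(h_1)$ with $h_1(t)=h_0^{-1}(t)=t^{2^{m-r}}$; applying $\pi_2$ after removing $X$ gives $\mathcal{E}(h_2)$ with $h_2(t)=t\,h_0(1/t)=t^{1-2^r}$. All three o-polynomials are monomial, so Corollary~\ref{cor:poly} produces directly, for each $s\in\{2^r,\,2^{m-r},\,1-2^r\}$, the $g$-function $\langle i,u\rangle^{s^{-1}}\langle 1,u\rangle^{q-s^{-1}}$ and the Niho bent function $tr\bigl(\langle i,x\rangle^{s^{-1}}\langle 1,x\rangle^{q-s^{-1}}\bigr)$, with $s^{-1}$ taken modulo $q-1$.

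For $s=1-2^r$ this output matches the third representative stated in the theorem verbatim. For $s=2^r$ and $s=2^{m-r}$ I would show by a short direct calculation --- using $u\bar{u}=1$, $T(i)=1$, and polynomial division, exactly as in the final step of the hyperconic proof --- that the resulting $g$-function agrees with $g_r$ respectively $g_{m-r}$ from Proposition~\ref{prop:f-transl} up to addition of a linear functional $\langle c,u\rangle$. By the remark at the end of Section~\ref{bent-1var} this preserves the equivalence class of the associated Niho bent function, identifying the first two classes with those of $f_r$ and $f_{m-r}$. Combining the orbit analysis with the equivalence criterion recalled before Theorem~\ref{thm:g-neighbor} then shows the three resulting classes are pairwise distinct and exhaust all Niho bent functions associated with $\mathcal{D}(t^{2^r})$.
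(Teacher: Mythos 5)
Your proposal is correct and follows essentially the same route as the paper: cite the three-orbit structure of the stabilizer of $\mathcal{D}(t^{2^r})$ from the literature, transport orbit representatives to the nucleus position $Y$ via $\pi_1$ and $\pi_2$ to obtain the o-polynomials $t^{2^r}$, $t^{2^{m-r}}$ and $t^{1-2^r}$, and apply Theorem \ref{thm:oval} (i.e.\ Corollary \ref{cor:poly} for monomials) to read off the three $g$-functions. The paper performs exactly the identification you sketch, showing the first two outputs equal $g_r(u)$ and $g_{m-r}(u)$ plus a linear term $\langle c,u\rangle$, which does not change the equivalence class.
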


\begin{proof}  
The stabilizer of the translation hyperoval $\mathcal{H}={\mathcal D} (t^{2^r})$ has three orbits 
\cite{Hir,OKeefe1990,OKeefe1994} on $\mathcal{H}$, one orbit contains the point $Y$, 
the second one contains the point $Z$, and the third one contains the points $X,W$. 
Therefore, there are exactly three inequivalent ovals $\mathcal{E}(h)$ with nucleus  $Y$. 
They are defined by the o-polynomials 
$h_0(t) = t^r$,  $h_1(t) = t^{2^{m-r}}$ (obtained by using the map $\pi_1$) and  
$h_2(t) = t^{1-2^r}$ (obtained by using the map $\pi_2$). 
Their $g$-functions by Theorem \ref{thm:oval} are  respectively 
\begin{eqnarray*}
g(u) 
&=& 
h_0^{-1} \left(\frac{ \langle i, u \rangle}{ \langle 1, u\rangle} \right) \langle 1, u\rangle \\
&=& 
\left(\frac{ \langle i, u \rangle}{ \langle 1, u\rangle} \right)^{2^{m-r}} \langle 1, u\rangle \\
&=&  
\frac{(\bar{i}u + i \bar{u})^{2^{m-r}}(u+\bar{u})}  {u^{2^{m-r}}+\bar{u}^{2^{m-r}} } \\
&=& 
\frac{uu^{2^{m-r}} + \bar{u}\bar{u}^{2^{m-r}}}  {u^{2^{m-r}}+\bar{u}^{2^{m-r}} } + 
(i^{2^{m-r}} u + \bar{i}^{2^{m-r}}\bar{u}) \\
&=& 
 g_r(u) + \langle \bar{i}^{2^{m-r}}, u \rangle , 
\end{eqnarray*} 
$$
g'(u) =  
h_1^{-1} \left(\frac{ \langle i, u \rangle}{ \langle 1, u\rangle} \right) \langle 1, u\rangle = 
\left(\frac{ \langle i, u \rangle}{ \langle 1, u\rangle} \right)^{2^r} \langle 1, u\rangle =
g_{m-r}(u) + \langle \bar{i}^{2^r}, u \rangle , 
$$
\begin{eqnarray*} 
g''(u) 
&=& 
h_2^{-1} \left(\frac{ \langle i, u \rangle}{ \langle 1, u\rangle} \right) \langle 1, u\rangle = 
\left(\frac{ \langle i, u \rangle}{ \langle 1, u\rangle} \right)^{(1-2^r)^{-1}} \langle 1, u\rangle \\
&=& 
\langle i, u \rangle^{(1-2^r)^{-1}}  { \langle 1, u\rangle}^{q-(1-2^r)^{-1}} .
\end{eqnarray*}
First two $g$-functions are equivalent to $g_r(u)$ and $g_{m-r}(u)$ respectively.  
\end{proof} 

 We note that  for the third function we have 
 $$(1-2^r)^{-1}  \equiv - \sum_{j=0}^{s-1} 2^{rj} \pmod{q-1} ,$$
 where $rs \equiv 1 \pmod{m}$. 

\begin{example}
\label{ex:translation} 
Let $m=5$.  Then there is only one (irregular) translation hyperoval (up to equivalency). 
It is defined by the o-polynomial $h(t)=t^4$.  
There are exactly three equivalence classes of Niho bent functions associated with this translation hyperoval.  
Their associated $g$-functions are 
$g_2(u)= 1 +  u^{16} + \bar{u}^{16}$, $g_3(u)= 1+ u^8 + u^9 + u^{16} + \bar{u}^8 + \bar{u}^9 + \bar{u}^{16}$, 
$g'(u)=1+\omega u^4 + \omega u^5 + \omega u^8 + \omega u^9 + \omega u^{12} + \omega u^{13} 
+ \bar{\omega} \bar{u}^4+ \bar{\omega}\bar{u}^5 + \bar{\omega}\bar{u}^8+\bar{\omega}\bar{u}^9 + 
\bar{\omega}\bar{u}^{12}+\bar{\omega} \bar{u}^{13}$. 
Hence corresponding Niho bent functions are $f_2(x)$,  $f_3(x)$ and 
$f'(x)= Tr ( \omega x^{528}+ \omega x^{466} + \omega x^{962} + \omega x^{404} + 
\omega x^{900} + \omega x^{342} + \omega x^{838} )$. 
\end{example}

We remind that if $g(u)=0$ then we assume that $u/g(u)=u_{\infty}$ is the element on infinity in direction $u$.  
We can ensure $g(u) \ne 0$ for all $u\in S$, by taking in place of $g(u)$ an equivalent function 
$g(u) + \langle c, u\rangle$, with appropriate $c\in K$, see  \cite{Ab2017}. 
Then all elements of the hyperoval $\mathcal{H} = \{ u/g(u) \mid u \in S\} \cup \{ 0 \}$ will be in $K$. 
In our current case we will see that if $g_r(u)=0$ for some $u$, then we can take the equivalent function 
$g(u) = g_r(u)+u+\bar{u}$. Functions  $g_r(u)$ and $g_r(u) +u+\bar{u}$ produce equivalent hyperovals.  
We  study now when the equations $g_r(u)=0$ and $g_r(u) +u+\bar{u}=0$ have solutions in $S$. 

First we consider  the following  
\begin{lemma}
\label{lem:gcd} 
Let $\gcd(m,r)=1$. Then $\gcd(2^m +1,2^r +1)$ (respectively, $\gcd(2^m +1,2^r -1)$) is equal to either 
$1$ or $3$. 
Moreover, $\gcd(2^m +1,2^r +1)=3$ if and only if $m$ is odd and $r$ is odd. 
In addition, $\gcd(2^m +1,2^r -1)=3$ if and only if $m$ is odd and $r$ is even. 
\end{lemma}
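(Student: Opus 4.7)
The plan is to reduce both gcd computations to the classical identity $\gcd(2^a-1,2^b-1)=2^{\gcd(a,b)}-1$, exploiting the elementary observation that $2^k+1$ divides $2^{2k}-1$. The parity conditions will then drop out of a short check modulo $3$, using the fact that $3\mid 2^k+1$ iff $k$ is odd, and $3\mid 2^k-1$ iff $k$ is even.

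For the first statement, let $d=\gcd(2^m+1,2^r+1)$. Then $d\mid 2^{2m}-1$ and $d\mid 2^{2r}-1$, so
$$d \mid \gcd(2^{2m}-1,2^{2r}-1)=2^{\gcd(2m,2r)}-1=2^{2\gcd(m,r)}-1=2^2-1=3.$$
Hence $d\in\{1,3\}$, and $d=3$ iff $3$ divides both $2^m+1$ and $2^r+1$, i.e.\ iff both $m$ and $r$ are odd.

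For the second statement, let $d'=\gcd(2^m+1,2^r-1)$. The same trick gives $d'\mid 2^{\gcd(2m,r)}-1$. Here I would handle the two parity cases of $r$ separately. If $r$ is odd then $\gcd(2m,r)=\gcd(m,r)=1$, forcing $d'=1$. If $r$ is even then the hypothesis $\gcd(m,r)=1$ forces $m$ odd, and a direct check shows $\gcd(2m,r)=2$; hence $d'\mid 3$, so $d'\in\{1,3\}$. Combining with $3\mid 2^r-1$ iff $r$ even and $3\mid 2^m+1$ iff $m$ odd, we obtain $d'=3$ iff $m$ is odd and $r$ is even.

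There is no real obstacle here, the only care needed is the bookkeeping in the second case: confirming that $\gcd(m,r)=1$ together with $r$ even forces $m$ odd and $\gcd(2m,r)=2$, which is what aligns the divisibility-by-$3$ criterion with the stated parity conclusion.
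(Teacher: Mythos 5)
Your proof is correct, but it takes a different route from the paper's. The paper runs a Euclidean-style chain directly on the given expressions, e.g.\ $\gcd(2^m+1,2^r+1)=\gcd(2^m-2^r,2^r+1)=\gcd(2^{m-r}-1,2^r+1)$, and iterates, tracking how the $\pm 1$ signs evolve until the chain terminates at $\gcd(2+1,2-1)=1$ or $\gcd(2+1,2+1)=3$ because $\gcd(m,r)=1$; the parity statements are then read off from the divisibility-by-$3$ criterion. You instead bound the gcd from above in one stroke by embedding both numbers into Mersenne-type quantities ($2^k+1\mid 2^{2k}-1$) and invoking the classical identity $\gcd(2^a-1,2^b-1)=2^{\gcd(a,b)}-1$, which immediately gives $d\mid 2^{2\gcd(m,r)}-1=3$ in the first case and $d'\mid 2^{\gcd(2m,r)}-1$ in the second; the same mod-$3$ check then decides between $1$ and $3$. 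What your version buys is a cleaner termination argument: the paper's ``using consecutively such transformations, we will finally get\dots'' step requires one to verify that the alternating chain really lands on the claimed endpoint, whereas you reduce to a standard identity plus the small bookkeeping that $r$ even and $\gcd(m,r)=1$ force $m$ odd and $\gcd(2m,r)=2$ (which you verify correctly, since $\gcd(2m,2(r/2))=2\gcd(m,r/2)$ and $r/2$ divides $r$). Both arguments conclude with the identical observation that $3\mid 2^k+1$ iff $k$ is odd and $3\mid 2^k-1$ iff $k$ is even, so the parity characterizations come out the same way. No gaps.
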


\begin{proof} 
Let $m>r$. Consider transformations of the form 
$$\gcd(2^m +1,2^r +1) = \gcd(2^m -2^r,2^r +1) = \gcd(2^{m-r} -1,2^r +1),$$ 
$$\gcd(2^m +1,2^r -1) = \gcd(2^m +2^r,2^r +1) = \gcd(2^{m-r} +1,2^r +1).$$ 
Using consecutively such transformations, we will finally get at the end of the chain 
$\gcd(2 +1,2 -1)=1$ or  $\gcd(2 +1,2 +1)=3$, 
since $\gcd(m,r)=1$. 

We note that 3 divides $\gcd(2^m +1,2^r +1)$ if and only if $m$ is odd and $r$ is odd, and 
3 divides $\gcd(2^m +1,2^r -1)$ if and only if $m$ is odd and $r$ is even. 
\end{proof}

\begin{lemma}
\label{lem:g} 
Let $1<r<m-1$, $\gcd(m,r)=1$, and  let $S= \langle w \rangle$. Then 
\begin{enumerate}
\item
The equation $g_r(u)=0$ has a solution in $S$ if and only if $m$ is odd and $r$ is even. 
In this case $u=w^{(q+1)/3}$ or $u=\bar{w}^{(q+1)/3}$. 
\item 
The equation $g_r(u) +u+\bar{u}=0$ has a solution in $S$ if and only if $m$ is odd and $r$ is odd.  
In this case $u=w^{(q+1)/3}$ or $u=\bar{w}^{(q+1)/3}$. 
\end{enumerate}
\end{lemma}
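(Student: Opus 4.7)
The plan is to reduce each equation to a cyclotomic condition on $u\in S$ using the closed form of $g_r(u)$ in Proposition \ref{prop:f-transl}, and then invoke Lemma \ref{lem:gcd} to determine when solutions exist.

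For part (1), I start from
$$g_r(u) = \frac{uu^{2^{m-r}} + \bar u\bar u^{2^{m-r}}}{u^{2^{m-r}} + \bar u^{2^{m-r}}}$$
and observe that, since $\bar u = u^{-1}$ on $S$, the vanishing of the numerator is equivalent to $u^{1+2^{m-r}} + u^{-(1+2^{m-r})} = 0$. Multiplying by $u^{1+2^{m-r}}$ and using that in characteristic $2$ one has $a^2=1 \Leftrightarrow a=1$, this collapses to $u^{1+2^{m-r}}=1$. Combined with $u^{q+1}=1$, this forces the order of $u$ to divide $d:=\gcd(q+1,\,2^{m-r}+1)$. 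By Lemma \ref{lem:gcd}, $d\in\{1,3\}$, and $d=3$ holds precisely when $m$ and $m-r$ are both odd; given $\gcd(m,r)=1$, this is the same as $m$ odd and $r$ even. When $d=3$, the order-$3$ subgroup of $S$ is $\langle w^{(q+1)/3}\rangle$, so the non-identity solutions are exactly $w^{(q+1)/3}$ and $w^{2(q+1)/3} = \bar w^{(q+1)/3}$.

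For part (2), I bring $g_r(u) + u + \bar u$ over a common denominator. The characteristic-$2$ cancellation of the two $u^{1+2^{m-r}}$ and two $\bar u^{1+2^{m-r}}$ terms leaves the numerator equal to $u\bar u^{2^{m-r}} + \bar u u^{2^{m-r}}$, which on $S$ equals $u^{1-2^{m-r}} + u^{2^{m-r}-1}$. Repeating the same argument as in part (1), this vanishes iff $u^{2^{m-r}-1}=1$, forcing the order of $u$ to divide $\gcd(q+1,\,2^{m-r}-1)$. By Lemma \ref{lem:gcd} this gcd equals $3$ iff $m$ is odd and $m-r$ is even, that is, $m$ odd and $r$ odd, and the solutions are the same two cube roots of unity as before.

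In both parts I still need to dispose of two side issues. First, $u=1$ is excluded in part (1) because $g_r(1)=1$ by the convention in Proposition \ref{prop:f-transl}, and in part (2) because $g_r(1)+1+1 = 1 \ne 0$. Second, one must verify that the denominator $u^{2^{m-r}}+\bar u^{2^{m-r}}$ does not vanish at the candidate solutions; for $u$ a non-trivial cube root of unity one has $1+u+\bar u=0$, hence $u+\bar u = 1$, and so $u^{2^{m-r}}+\bar u^{2^{m-r}} = (u+\bar u)^{2^{m-r}} = 1$.

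The only mildly delicate step is the algebraic manipulation in part (2); once the numerator simplification goes through and the characteristic-$2$ identity $a+a^{-1}=0 \Leftrightarrow a=1$ is used, the argument is essentially parallel to part (1) and Lemma \ref{lem:gcd} finishes both cases uniformly.
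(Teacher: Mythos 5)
Your proof is correct and follows essentially the same route as the paper: reduce each equation via the closed form of $g_r$ to the cyclotomic condition $u^{2^{m-r}+1}=1$ (resp.\ $u^{2^{m-r}-1}=1$), apply Lemma \ref{lem:gcd} with $m-r$ in place of $r$, and identify the nontrivial solutions as the two elements of order $3$ in $S$. Your explicit handling of the side issues (nonvanishing of the denominator at the cube roots of unity and the exclusion of $u=1$ via the convention $g_r(1)=1$) is a small but welcome addition that the paper leaves implicit.
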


\begin{proof} 
1. We have  $g_r(u)=0$ if and only if  $u u^{2^{m-r}} + \bar{u}\bar{u}^{2^{m-r}} =0$  
if and only if $u u^{2^{m-r}} = u^{1+2^{m-r}} \in F$. 
Denoting $u=w^t \ne 1$,  we have $t(1+2^{m-r}) \equiv 0 \pmod{q+1}$. 
This congruence has a nontrivial solution if and only if  $\gcd(2^{m-r}+1, 2^m +1) \ne 1$.  
By Lemma \ref{lem:gcd} we see that  $\gcd(2^{m-r}+1, 2^m +1)=3$, which means that 
$m$ is odd and $r$ is even.  Therefore, $t= \pm (q+1)/3$. 

2. Since 
$$g_r(u) +u+\bar{u} = \frac{u u^{2^{m-r}} + \bar{u}\bar{u}^{2^{m-r}}} {u^{2^{m-r}} + \bar{u}^{2^{m-r}}} +u+\bar{u} = 
\frac{\bar{u} u^{2^{m-r}} +  u \bar{u}^{2^{m-r}}} {u^{2^{m-r}} + \bar{u}^{2^{m-r}}}, $$
we see that   $g_r(u) +u+\bar{u}=0$ if and only if  
$\bar{u} u^{2^{m-r}} +  u \bar{u}^{2^{m-r}} =0$  
if and only if $\bar{u} u^{2^{m-r}} = u^{2^{m-r}-1} \in F$. 
Denoting $u=w^t$, we have $t(2^{m-r}-1) \equiv 0 \pmod{q+1}$. 
This congruence has a nontrivial solution if and only if  $\gcd(2^{m-r}-1, 2^m +1) \ne 1$.  
By Lemma \ref{lem:gcd} we see that  $\gcd(2^{m-r}-1, 2^m +1)=3$, which means that 
$m$ is odd and $r$ is odd.  Therefore, $t= \pm (q+1)/3$. 
\end{proof}

We can calculate now the bent function $f_r(x)$ in other terms. 
Let $x=\lambda u$, $\lambda \in F$, $u\in S$. Then  $\lambda = \sqrt{x \bar{x}}$,  $u = \sqrt{x/ \bar{x}}$.  

If $x \in F$, then $u=1$ and 
$$f_r(x) = f_r(\lambda u) = tr (\lambda g_r(u)) = tr (\lambda) = tr ( \sqrt{x \bar{x}} ).$$

If $x\not\in F$, then $u\ne 1$ and 
\begin{eqnarray*}
f_r(x) = f_r(\lambda u) = tr (\lambda g_r(u)) 
& = &  
tr  \left( \sqrt{x \bar{x}} \cdot 
\frac{(\sqrt{x/ \bar{x}})^{2^{m-r}+1} + (\sqrt{\bar{x}/x})^{2^{m-r}+1}} 
{(\sqrt{x/ \bar{x}})^{2^{m-r}} + (\sqrt{\bar{x}/x})^{2^{m-r}}}\right) \\
& = &  
tr \left(  \frac{x x^{2^{m-r}} + \bar{x}\bar{x}^{2^{m-r}}} {x^{2^{m-r}} + \bar{x}^{2^{m-r}}}\right).
\end{eqnarray*}

Therefore, 
$$f_r(x) = 
\left\{ \begin{array} {ll}
tr ( \sqrt{x \bar{x}}),                                            & {\rm if} \ x\in F, \\ 
tr \left(  \frac{x x^{2^{m-r}} + \bar{x}\bar{x}^{2^{m-r}}} {x^{2^{m-r}} + \bar{x}^{2^{m-r}}}\right), & {\rm if} \ x\not\in F.
\end{array} \right. $$
In other words,
$$f_r(x) = tr \left(\sqrt{x \bar{x}} + \sqrt{x \bar{x}} (x + \bar{x})^{q^2-1}+ 
(x x^{2^{m-r}} + \bar{x}\bar{x}^{2^{m-r}}) (x^{2^{m-r}} + \bar{x}^{2^{m-r}})^{q^2-2} \right).$$

\subsection{Niho bent functions associated with the Segre hyperovals}
\label{segre}

The Segre hyperoval \cite{Segre62,Segre71} is defined by the o-polynomial $h(t)=t^6$, 
where $m \ge 5$ and $m$ is odd. Since $m$ is odd, 
in place of $i \in K$ we can take $\omega \in K$, $\omega^2 + \omega +1 =0$. 

Following  \cite{Buda2015}, we make use of Dickson polynomials. Dickson polynomial $D_s(x)$ is a 
permutation polynomial if $s$ is relatively prime with $2^n-1$, and since $D_s \circ  D_{s'} = D_{ss'}$, 
the inverse of $D_s$ is $D_{s'}$ where $s'$ is the inverse of $s$ modulo $2^n-1$. 
Denoting the inverse of $5$ modulo $2^n-1$ by $\frac{1}{5}$ we get $D_5^{-1} = D_{\frac{1}{5}}$. 
Note that $D_5(x) = x + x^3 + x^5$. 

\begin{theorem}
\label{thm:segre} 
If $m>5$, $m$ odd, then there are exactly four equivalence classes of Niho bent functions associated 
with the Segre hyperoval. 
Their $g$-function representatives are 
$(\bar{\omega}u + \omega \bar{u})^{1/6}(u+\bar{u})^{5/6}$, 
$(\bar{\omega}u + \omega \bar{u})^6(u+\bar{u})^{q-6}$, 
$(\bar{\omega}u + \omega \bar{u})^{-1/5}(u+\bar{u})^{6/5}$ and 
$$\left( D_{1/5} ((\omega u+ \bar{\omega} \bar{u}) ( u+\bar{u})^{q-2})\right)^{q^2-2} \langle 1, u\rangle .$$

If $m=5$, then there are exactly two equivalence classes of Niho bent functions associated 
with the Segre hyperoval. 
Their $g$-function representatives are 
$(\bar{\omega}u + \omega \bar{u})^{1/6}(u+\bar{u})^{5/6}$ and 
$$\left( D_{1/5} ((\omega u+ \bar{\omega} \bar{u}) ( u+\bar{u})^{q-2})\right)^{q^2-2} \langle 1, u\rangle  .$$
\end{theorem}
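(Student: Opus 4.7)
The plan is to follow the strategy of Theorems \ref{thm:hyperconic} and \ref{thm:translation}: describe the orbits of the stabilizer of $\mathcal{H}=\mathcal{D}(t^6)$ on its own points, and for each orbit compute the associated $g$-function via Theorem \ref{thm:oval}. Since $m$ is odd we may take $i=\omega$ with $\omega^2+\omega+1=0$. By the known description of the automorphism group of the Segre hyperoval \cite{OKeefe1990,OKeefe1994}, for $m>5$ the stabilizer has four orbits on $\mathcal{H}$, with representatives $Y$, $Z$, $X$, $W$, whereas for $m=5$ the stabilizer is strictly larger and the points $Y$, $Z$, $X$ fall into a single orbit, leaving only the two orbits $\{Y,Z,X\}$ and $\{W\}$.

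For the orbits of $Y$, $Z$, $X$, applying (when needed) the collineations $\pi_1$ and $\pi_2$ introduced at the start of Section \ref{Niho-ovals} supplies the monomial o-polynomials $h_0(t)=t^6$, $h_1(t)=t^{1/6}$, $h_2(t)=t^{-5}$, whose ovals $\mathcal{E}(h_j)$ all have nucleus $Y$. Corollary \ref{cor:poly} applied with the exponents $s=6,\ 1/6,\ -5$ (interpreted modulo $q-1$) then yields the first three $g$-function representatives stated in the theorem; the linear summand $\langle i,u\rangle$ that appears in Theorem \ref{thm:oval} is absorbed into the equivalence class.

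The main technical step is the fourth orbit, which through $\pi_3$ gives $h_3(t)=t+(t+1)h_0(t/(t+1))=t+t^6/(t+1)^5$. Setting $v=h_3^{-1}(t)$ and using $(v+1)^5=v^5+v^4+v+1$ in characteristic two, the equation $h_3(v)=t$ becomes $t(v+1)^5=v+v^2+v^5$. The key substitution is $y=1/(v+1)$: expressing $v, v^2, v^5$ as rational functions of $y$ (using $(1+y)^5=1+y+y^4+y^5$) and clearing the common denominator $y^5$ produces the identity
\[
 t \;=\; 1+y+y^3+y^5 \;=\; 1+D_5(y),
\]
where $D_5(y)=y+y^3+y^5$ is the Dickson polynomial of degree five. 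Inverting gives $y=D_{1/5}(t+1)$, and hence $v=1+1/D_{1/5}(t+1)$. The constant $1$ contributes the linear term $\langle 1+i,u\rangle$ to $g(u)$ via Theorem \ref{thm:oval}, which is absorbed by equivalence. Substituting $t+1=(\omega u+\bar{\omega}\bar{u})(u+\bar{u})^{q-2}$ and writing $x^{-1}=x^{q^2-2}$ for $x\in K^*$ then yields the fourth $g$-function of the theorem.

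For $m=5$ one has $1/6\equiv-5\equiv 26 \pmod{31}$, so $h_1=h_2$ as o-polynomials and the orbits of $Z$ and $X$ coincide automatically; the enlarged automorphism group at $m=5$ additionally links these to the orbit of $Y$, leaving only the two classes arising from $h_0$ and $h_3$. The main obstacle throughout is the Dickson polynomial inversion needed in the $h_3$ case; the rest is orbit bookkeeping combined with direct application of Theorem \ref{thm:oval} and Corollary \ref{cor:poly}.
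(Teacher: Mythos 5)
Your proposal is correct and follows essentially the same route as the paper: the orbit count of the stabilizer on $\{X,Y,Z,W\}$ from \cite{OKeefe1990,OKeefe1994}, the o-polynomials $t^6$, $t^{1/6}$, $t^{1-6}$, $t+(t+1)(t/(t+1))^6$ obtained via $\pi_1,\pi_2,\pi_3$, and Theorem \ref{thm:oval} (equivalently Corollary \ref{cor:poly}) for the $g$-functions. Your Dickson inversion via $y=1/(v+1)$ is the same computation as the paper's substitution $s=t+1$ yielding $h_3^{-1}(t)=(D_{1/5}(t+1))^{q^2-2}+1$, with the residual linear terms absorbed by equivalence exactly as in the paper.
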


\begin{proof}  
If $m>5$ then the stabilizer of the Segre hyperoval $\mathcal{H} = \mathcal{D}(t^6)$ has four orbits 
\cite{OKeefe1990,OKeefe1994} on $\mathcal{H}$, each orbit contains one point from the set $\{ X, Y, Z, W \}$. 
Therefore, there are exactly four inequivalent ovals $\mathcal{E}(h)$ with nucleus  $Y$. 
They are defined by the o-polynomials $h_0(t) = t^6$,  $h_1(t) = t^{1/6}$,  $h_2(t) = t^{1-6}$, and 
$h_3(t) = t+ (t+1)(t/(t+1))^6 $ (obtained by using the maps $\pi_1$, $\pi_2$ and $\pi_3$). 
Their $g$-functions by Theorem \ref{thm:oval} are  respectively 
$$
g_0(u) =  
h_0^{-1} \left(\frac{ \langle \omega, u \rangle}{ \langle 1, u\rangle} \right) \langle 1, u\rangle = 
\left(\frac{ \langle \omega, u \rangle}{ \langle 1, u\rangle} \right)^{1/6} \langle 1, u\rangle =
(\bar{\omega}u + \omega \bar{u})^{1/6}(u+\bar{u})^{5/6} , 
$$
$$
g_1(u) =  
h_1^{-1} \left(\frac{ \langle \omega, u \rangle}{ \langle 1, u\rangle} \right) \langle 1, u\rangle = 
\left(\frac{ \langle \omega, u \rangle}{ \langle 1, u\rangle} \right)^6 \langle 1, u\rangle =
(\bar{\omega}u + \omega \bar{u})^6(u+\bar{u})^{q-6} , 
$$
$$
g_2(u) =    
h_2^{-1} \left(\frac{ \langle \omega, u \rangle}{ \langle 1, u\rangle} \right) \langle 1, u\rangle  =    
\left(\frac{ \langle \omega, u \rangle}{ \langle 1, u\rangle} \right)^{-1/5} \langle 1, u\rangle  =  
(\bar{\omega}u + \omega \bar{u})^{-1/5}(u+\bar{u})^{6/5} . 
$$
For calculation of $g_3(u)$ we need to find the inverse of the function $h_3(t)$. 
Let $h_3(t) = t+ (t+1)(t/(t+1))^6 =z$. Put $s=t+1$. 
Then $z=1+s+s ((s+1)/s)^6 = 1+ \frac{1}{s} + \frac{1}{s^3} + \frac{1}{s^5}  = 1 + D_5(\frac{1}{s}) $.  
Therefore,  $\frac{1}{s} = D_{1/5}(z+1)$ and $t= s+1 = (D_{1/5}(z+1))^{q^2-2} +1$. 
Hence, 
$$h_3^{-1} (t) =   (D_{1/5}(t+1))^{q^2-2} +1.$$
It follows that 
\begin{eqnarray*}
g_3(u) 
& = &   
h_3^{-1} \left(\frac{ \langle \omega, u \rangle}{ \langle 1, u\rangle} \right) \langle 1, u\rangle \\
& = &  
\left( D_{1/5}\left(\frac{ \langle \omega, u \rangle}{ \langle 1, u\rangle} + 1\right)\right)^{q^2-2} 
\langle 1, u\rangle + \langle 1, u\rangle \\
& = &  
\left( D_{1/5}\left(\frac{ \langle \bar{\omega}, u \rangle}{ \langle 1, u\rangle} \right)\right)^{q^2-2} 
\langle 1, u\rangle + \langle 1, u\rangle \\
& = &  
\left( D_{1/5} ((\omega u+ \bar{\omega} \bar{u}) ( u+\bar{u})^{q-2})\right)^{q^2-2} 
\langle 1, u\rangle + \langle 1, u\rangle .
\end{eqnarray*}

If $m=5$ then the stabilizer of the Segre hyperoval $\mathcal{D}(t^6)$ has two orbits \cite{OKeefe1990,OKeefe1994} 
on $\mathcal{D}(t^6)$, one orbit contains points  $X, Y, Z$, and the second one contains the point $W$. 
Therefore, there are exactly two inequivalent ovals $\mathcal{E}(h)$ with nucleus  $Y$. 
They are defined by  o-polynomials $h_0(t) = t^6$ and $h_3(t) = t+ (t+1)(t/(t+1))^6 $. 
\end{proof}

Note that $6^{-1}$ modulo $q-1$ is $(5q-4)/6$ and $5^{-1}$ modulo $q-1$ is $(3q^2-2)/5$. 

For $m=5$ in  \cite{OKeefe1990} it was mistakenly stated that the two classes of inequivalent ovals for the Segre 
hyperoval are represented by $\mathcal{E}(t^6)$ and $\mathcal{E}(t^{1/6}) = \mathcal{E}(t^{26})$. 
In fact, they are represented by $\mathcal{E}(t^6)$ and $\mathcal{E}(h_3(t))$. 
Their corresponding $g$-functions can be taken as 
$\omega u^4 +  \bar{\omega} \bar{u}^4+ u^5 +\bar{u}^5 + \omega u^8 + \bar{\omega} \bar{u}^8 + 
u^9 + \bar{u}^9 +\omega u^{12} + \bar{\omega} \bar{u}^{12} + u^{13} + u^{20}$ 
and  $1+ \omega u^9 + \bar{\omega}\bar{u}^9 + \bar{\omega} u^{12} +  \omega \bar{u}^{12}$. 
Ovals $\mathcal{E}(t^6)$, $\mathcal{E}(t^{1/6})$ and $\mathcal{E}(t^{1-6})$ are equivalent, 
and this fact was first observed in  \cite{Buda2015} implicitly in the language of Niho bent functions.

\subsection{Niho bent functions associated with the Glynn  hyperovals}
\label{glynn}

Let $m\ge 7$ be odd. Also let $\sigma =2^{(m+1)/2}$, and 
$$\gamma  = 
\left\{ \begin{array} {ll} 
2^k,                                            & {\rm if} \ m=4k-1, \\ 
2^{3k+1}, & {\rm if} \ m=4k+1.
\end{array} \right. $$
Hence $\gamma^4 \equiv \sigma^2 \equiv 2 \pmod{q-1}$. The Glynn hyperovals \cite{Glynn1983} are 
defined by the o-polynomials $h(t) = t^{3\sigma +4}$ and $h(t) = t^{\sigma + \gamma}$. 

\begin{theorem}
\label{thm:glynn} 
If $m\ge 7$, $m$ odd, then there are exactly four equivalence classes of Niho bent functions associated 
with the Glynn hyperoval  $\mathcal{D}(t^{3\sigma +4})$. 
Their $g$-function representatives are 
$(\bar{\omega}u + \omega \bar{u})^{3\sigma /2 - 2}(u+\bar{u})^{3-3\sigma /2 }$, 
$(\bar{\omega}u + \omega \bar{u})^{3\sigma +4}(u+\bar{u})^{{-3\sigma -3}}$, 
$(\bar{\omega}u + \omega \bar{u})^{(1-\sigma)/3}(u+\bar{u})^{(2+\sigma)/3}$ and 
$h_3^{-1} \left(\frac{ \langle \omega, u \rangle}{ \langle 1, u\rangle} \right) \langle 1, u\rangle $, where 
$h_3(t) = t+ (t+1)(t/(t+1))^{3\sigma +4} $. 

If $m\ge 9$, $m$ odd, then there are exactly four equivalence classes of Niho bent functions associated 
with the Glynn hyperoval  $\mathcal{D}(t^{\sigma +\gamma})$. 
Their $g$-function representatives are 
$(\bar{\omega}u + \omega \bar{u})^{-\gamma^{-1}+\sigma -\gamma +1}(u+\bar{u})^{\gamma^{-1}-\sigma +\gamma }$, 
$(\bar{\omega}u + \omega \bar{u})^{\sigma +\gamma}(u+\bar{u})^{{1-\sigma -\gamma}}$, 
$(\bar{\omega}u + \omega \bar{u})^{(-2\gamma^{-1} -\gamma +1)/3}(u+\bar{u})^{(2\gamma^{-1} +\gamma +2)/3}$ and 
$h_3^{-1} \left(\frac{ \langle \omega, u \rangle}{ \langle 1, u\rangle} \right) \langle 1, u\rangle $, where 
$h_3(t) = t+ (t+1)(t/(t+1))^{\sigma +\gamma} $. 

If $m=7$, then there are exactly two equivalence classes of Niho bent functions associated 
with the Glynn hyperoval  $\mathcal{D}(t^{\sigma +\gamma})$. 
Their $g$-function representatives are 
$(\bar{\omega}u + \omega \bar{u})^{-\gamma^{-1}+\sigma -\gamma +1}(u+\bar{u})^{\gamma^{-1}-\sigma +\gamma }$ 
 and  $h_3^{-1} \left(\frac{ \langle \omega, u \rangle}{ \langle 1, u\rangle} \right) \langle 1, u\rangle $. 
\end{theorem}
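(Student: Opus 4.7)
The plan is to adapt the strategy used for the Segre hyperoval (Theorem \ref{thm:segre}) to the Glynn case. First, I invoke the description of the stabilizer of the Glynn hyperovals and its orbit structure on their own points from \cite{OKeefe1990,OKeefe1994}. Generically, both $\mathcal{D}(t^{3\sigma+4})$ and $\mathcal{D}(t^{\sigma+\gamma})$ have exactly four orbits on their point sets, with representatives that may be chosen from the fundamental quadrangle $\{X,Y,Z,W\}$. The exceptional case is $m=7$ for $\mathcal{D}(t^{\sigma+\gamma})$, where three of the fundamental points merge into a single orbit, leaving only two orbits; this mirrors the collapse observed for the Segre hyperoval at $m=5$.

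For each orbit I select a representative point and apply one of the collineations $\pi_1$, $\pi_2$, $\pi_3$ (or the identity) that move this point to $Y=(1:0:0)$. Writing $s = 3\sigma+4$ or $s=\sigma+\gamma$, this produces the four o-polynomials $h_0(t)=t^s$, $h_1(t)=t^{s^{-1}}$, $h_2(t)=t^{1-s}$, and $h_3(t)=t+(t+1)(t/(t+1))^s$, whose associated ovals $\mathcal{E}(h_j)$ have nucleus $Y$. Theorem \ref{thm:oval}, together with Corollary \ref{cor:poly} for the monomial cases, then produces the corresponding $g$-functions, which must be identified with those listed in the statement.

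For the three monomial representatives the computation reduces to exponent arithmetic modulo $q-1$. From $\sigma=2^{(m+1)/2}$ one has $\sigma^2\equiv 2\pmod{q-1}$, and a short case check using the two definitions of $\gamma$ gives $\gamma^2\equiv\sigma$ and $\gamma^4\equiv 2\pmod{q-1}$. These identities yield, for example, $(3\sigma+4)^{-1}\equiv 3\sigma/2-2$, $(1-3\sigma-4)^{-1}\equiv (1-\sigma)/3$, and analogously $(1-\sigma-\gamma)^{-1}\equiv(-2\gamma^{-1}-\gamma+1)/3$; division by $3$ is legitimate because $3\mid 2^m-1$ only when $m$ is even, so $\gcd(3,q-1)=1$ for the odd $m$ under consideration. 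Substituting these exponents into $\langle\omega,u\rangle^{s^{-1}}\langle 1,u\rangle^{q-s^{-1}}$ and its analogues gives exactly the $g$-functions in the theorem.

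The main obstacle is the map $\pi_3$: for the Segre exponent $s=6$, the substitution $s=t+1$ converted $h_3$ into the Dickson-polynomial identity $D_5(1/s)+1$, enabling inversion via $D_{1/5}$. For $s=3\sigma+4$ or $s=\sigma+\gamma$ no such compact closed form for $h_3^{-1}$ is available, and accordingly the fourth $g$-function is left in the implicit form $h_3^{-1}(\langle\omega,u\rangle/\langle 1,u\rangle)\langle 1,u\rangle$ as in the statement. The only remaining delicate point is checking that the orbit count truly collapses to two exactly in the exceptional case $m=7$ for the second Glynn family (and not for the first), which I handle by direct appeal to the explicit stabilizer data tabulated in \cite{OKeefe1990,OKeefe1994}.
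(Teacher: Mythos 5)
Your proposal is correct and follows essentially the same route as the paper: cite the orbit structure of the Glynn stabilizers from O'Keefe--Penttila, pass to the four o-polynomials $h_0(t)=t^s$, $h_1(t)=t^{s^{-1}}$, $h_2(t)=t^{1-s}$, $h_3(t)=t+(t+1)(t/(t+1))^s$ via $\pi_1,\pi_2,\pi_3$, compute the $g$-functions with Theorem \ref{thm:oval}, and treat $m=7$ for $\mathcal{D}(t^{\sigma+\gamma})$ as the exceptional two-orbit case. Your explicit verification of the exponent inverses (e.g.\ $(3\sigma+4)^{-1}\equiv 3\sigma/2-2$ and $(1-\sigma-\gamma)^{-1}\equiv(-2\gamma^{-1}-\gamma+1)/3$ using $\gamma^2\equiv\sigma$, $\gamma^4\equiv 2$) is a small but welcome addition the paper leaves implicit.
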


\begin{proof}  
The stabilizer of the Glynn hyperoval $\mathcal{H} = \mathcal{D}(t^{3\sigma +4})$ has four orbits 
\cite{OKeefe1990,OKeefe1994} on $\mathcal{H}$, each orbit contains one point from the set $\{ X, Y, Z, W \}$. 
Therefore, there are exactly four inequivalent ovals $\mathcal{E}(h)$ with nucleus $Y$. 
They are defined \cite{Glynn1983} by  o-polynomials 
$h_0(t) = t^{3\sigma +4}$,  
$h_1(t) = t^{(3\sigma +4)^{-1}} = t^{3\sigma /2 - 2} $,  
$h_2(t) = t^{1-(3\sigma +4)} $, and 
$h_3(t) = t+ (t+1)(t/(t+1))^{3\sigma +4} $ (obtained by using the maps $\pi_1$, $\pi_2$ and $\pi_3$). 
Their $g$-functions by Theorem \ref{thm:oval} are  respectively 
$$
g_0(u) =  
h_0^{-1} \left(\frac{ \langle \omega, u \rangle}{ \langle 1, u\rangle} \right) \langle 1, u\rangle = 
\left(\frac{ \langle \omega, u \rangle}{ \langle 1, u\rangle} \right)^{3\sigma /2 - 2} \langle 1, u\rangle =
(\bar{\omega}u + \omega \bar{u})^{3\sigma /2 - 2}(u+\bar{u})^{3-3\sigma /2 } , 
$$
$$
g_1(u) =  
h_1^{-1} \left(\frac{ \langle \omega, u \rangle}{ \langle 1, u\rangle} \right) \langle 1, u\rangle = 
\left(\frac{ \langle \omega, u \rangle}{ \langle 1, u\rangle} \right)^{3\sigma +4} \langle 1, u\rangle =
(\bar{\omega}u + \omega \bar{u})^{3\sigma +4}(u+\bar{u})^{{-3\sigma -3}} , 
$$
$$
g_2(u) =    
h_2^{-1} \left(\frac{ \langle \omega, u \rangle}{ \langle 1, u\rangle} \right) \langle 1, u\rangle  =    
\left(\frac{ \langle \omega, u \rangle}{ \langle 1, u\rangle} \right)^{(1-\sigma)/3} \langle 1, u\rangle  =  
(\bar{\omega}u + \omega \bar{u})^{(1-\sigma)/3}(u+\bar{u})^{(2+\sigma)/3} , 
$$
$$
g_3(u) =    
h_3^{-1} \left(\frac{ \langle \omega, u \rangle}{ \langle 1, u\rangle} \right) \langle 1, u\rangle   .
$$

For $m\ge 9$ the stabilizer of the Glynn hyperoval $\mathcal{H} = \mathcal{D}(t^{\sigma +\gamma})$ has four orbits 
\cite{OKeefe1990,OKeefe1994} on $\mathcal{H}$, each orbit contains one point from the set $\{ X, Y, Z, W \}$. 
Therefore, there are exactly four inequivalent ovals $\mathcal{E}(h)$ with nucleus $Y$. 
They are defined \cite{Glynn1983} by  o-polynomials 
$h_0(t) = t^{\sigma +\gamma}$,   
$h_1(t) = t^{(\sigma +\gamma)^{-1}} = t^{-\gamma^{-1}+\sigma -\gamma +1} $,  
$h_2(t) = t^{1-(\sigma +\gamma)} $, and 
$h_3(t) = t+ (t+1)(t/(t+1))^{\sigma +\gamma} $ (obtained by using the maps $\pi_1$, $\pi_2$ and $\pi_3$). 
Their $g$-functions by Theorem \ref{thm:oval} are  respectively 
\begin{eqnarray*}
g_0(u) 
&=&  
h_0^{-1} \left(\frac{ \langle \omega, u \rangle}{ \langle 1, u\rangle} \right) \langle 1, u\rangle = 
\left(\frac{ \langle \omega, u \rangle}{ \langle 1, u\rangle} \right)^{-\gamma^{-1}+\sigma -\gamma +1} \langle 1, u\rangle \\
&=& 
(\bar{\omega}u + \omega \bar{u})^{-\gamma^{-1}+\sigma -\gamma +1}(u+\bar{u})^{\gamma^{-1}-\sigma +\gamma } , 
\end{eqnarray*}
$$
g_1(u) =  
h_1^{-1} \left(\frac{ \langle \omega, u \rangle}{ \langle 1, u\rangle} \right) \langle 1, u\rangle = 
\left(\frac{ \langle \omega, u \rangle}{ \langle 1, u\rangle} \right)^{\sigma +\gamma} \langle 1, u\rangle =
(\bar{\omega}u + \omega \bar{u})^{\sigma +\gamma}(u+\bar{u})^{{1-\sigma -\gamma}} , 
$$
\begin{eqnarray*} 
g_2(u) 
&=&    
h_2^{-1} \left(\frac{ \langle \omega, u \rangle}{ \langle 1, u\rangle} \right) \langle 1, u\rangle  =    
\left(\frac{ \langle \omega, u \rangle}{ \langle 1, u\rangle} \right)^{(-2\gamma^{-1} -\gamma +1)/3} \langle 1, u\rangle \\
&=&  
(\bar{\omega}u + \omega \bar{u})^{(-2\gamma^{-1} -\gamma +1)/3}(u+\bar{u})^{(2\gamma^{-1} +\gamma +2)/3} , 
\end{eqnarray*}
$$
g_3(u) =    
h_3^{-1} \left(\frac{ \langle \omega, u \rangle}{ \langle 1, u\rangle} \right) \langle 1, u\rangle   .
$$

If $m=7$ then the stabilizer of the Glynn hyperoval $\mathcal{H} = \mathcal{D}(t^{\sigma +\gamma})$ 
has two orbits \cite{OKeefe1990,OKeefe1994} 
on $\mathcal{H}$, one orbit contains points  $X, Y, Z$, and the second one contains the point $W$. 
Therefore, there are exactly two inequivalent ovals $\mathcal{E}(h)$ with nucleus $Y$. 
They are defined by  o-polynomials $h_0(t) = t^{\sigma +\gamma}$ and $h_3(t) = t+ (t+1)(t/(t+1))^{\sigma +\gamma} $. 
\end{proof}

For $m=7$ in  \cite{OKeefe1990} it was mistakenly stated that the two classes of inequivalent ovals of the Glynn 
hyperoval $\mathcal{H} = \mathcal{D}(t^{\sigma +\gamma})$ are represented by $\mathcal{E}(t^{20})$ and 
$\mathcal{E}(t^{1/20}) = \mathcal{E}(t^{108})$. 
In fact, they are represented by $\mathcal{E}(t^{20})$ and $\mathcal{E}(h_3(t))$.

\subsection{Niho bent functions associated with the Payne, Cherowitzo, Subiaco and Adelaide  hyperovals}
\label{payne}

The orders of the automorphism groups of  the Payne, Cherowitzo, Subiaco and Adelaide  hyperovals 
are at most $10m$. 
Therefore, the action of the automorphism group on the points of the hyperoval has more than  
$2^m/(10m)$ orbits. 
Hence, the number of equivalence classes of  Niho bent functions associated with these  
hyperovals increases exponentially as the dimension of the underlying vector space grows. 

Nevertheless we can get some information on these equivalence classes. First of all, in case of odd $m$, 
there are some distinguished classes related to the points of the fundamental quadrangle. 

The Payne hyperoval \cite{Payne1985} is defined by the o-polynomial $h(t)= t^{1/6} + t^{1/2} + t^{5/6} $, 
where $m \ge 5$ and $m$ is odd. The stabilizer of the Payne hyperoval has order $2m$ and has about 
$2^m/(2m)$ orbits. These are $\{ Z \}$, $\{ W \}$, $\{ X, Y \}$,  and sets 
$$M_s = \{ (v^{s2^j}, h(v^{s2^j}), 1) \mid j=1, \dots, m\} \cup  \{ (1, h(v^{s2^j}),  v^{s2^j}) \mid j=1, \dots, m\}$$  
of size $2d$ where $d$ divides $m$ and $v$ is a primitive element of $F$ (see \cite{OKeefe1990,OKeefe1994,Thas1988}).  
So we can get $g$-functions using maps $\pi_1$ and $\pi_3$, and applying  Theorem \ref{thm:oval} 
(classes coming from the map $\pi_2$ and from the original o-polynomial are glued together since 
$t\cdot  h(1/t) = h(t)$), and additionally points from the orbits $M_s$ provide $g$-functions and Niho bent functions 
by using Theorems \ref{thm:oval-f}, \ref{thm:g-function}, \ref{thm:g-neighbor}, \ref{thm:f-neighbor}.  
In order to use these theorems we need a presentation of the Payne hyperoval in the affine plane $K=AG(2,q)$. 
The Payne hyperoval can be given \cite{Fisher2006} as 
\begin{equation}
\label{payne-u}
\{ u + u^3 + u^{-3} \mid u \in S \} \cup \{ 0 \}. 
\end{equation}
The stabilizer of the hyperoval is $Gal(K/\mathbb{F}_2)$. 
Then short orbits are $\{ 0\}$, $\{ 1\}$, $\{ \omega, \bar{\omega} \}$.  

The Cherowitzo hyperoval \cite{Cher1998} is defined by the o-polynomial 
$h(t)= t^{\sigma} + t^{\sigma+2} + t^{3\sigma +4}$, 
where $m \ge 5$,  $m$ is odd, and $\sigma = 2^{(m+1)/2}$, so $\sigma^2 \equiv 2 \pmod{q-1}$. 
The stabilizer of the Cherowitzo hyperoval has order $m$ and has about $2^m/m$ orbits. 
These are $\{ X\}$, $\{Y \}$, $\{ Z \}$, $\{ W \}$,  and sets (see \cite{Bayens,OKeefe1990,OKeefe1994,OKeefe1996}) 
$$M_s =  \{ ( v^{s2^j}, h(v^{s2^j}),1) \mid j=1, \dots, m\}.$$  

Note \cite{Carlet2011} that for the Payne o-polynomial  $h(t)= t^{1/6} + t^{1/2} + t^{5/6} $ the inverse  is 
$$h^{-1}(t) = (D_{1/5}(t))^6,$$  
and for the Cherowitzo o-polynomial $h(t)= t^{\sigma} + t^{\sigma+2} + t^{3\sigma +4}$, $\sigma = 2^{(m+1)/2}$, 
the inverse is 
$$h^{-1}(t) = t(t^{\sigma +1} + t^3 +t)^{\sigma/2 -1}.$$ 
These formulas allow us to find explicit expressions for  
$g(u)=h^{-1} \left(\frac{ \langle \omega, u \rangle}{ \langle 1, u\rangle} \right) \langle 1, u\rangle$.

The Adelaide hyperovals \cite{Cher2003} are given by o-polynomials
$$h(t) = \frac{T(b^k)}{T(b)}(t+1) + \frac{T((bt+b^q)^k)}{T(b)}(t+T(b)t^{1/2}+1)^{1-k} +t^{1/2}, $$
where $m$ even, $b\in S$, $b\not=1$ and $k=\pm \frac{q-1}{3}$.  
Description of the Adelaide hyperovals with the help of functions $g(u)$  looks much more simpler. 
The set
\begin{equation*}
\label{adelaide}
\mathcal{H} = \left\{\frac{u}{1+ u^{(q-1)/3} + \bar{u}^{(q-1)/3}} \mid \ u\in S \right\} \cup \{ 0\}
\end{equation*}
gives representation of the Adelaide hyperoval \cite{Ab2017,Ab2019} in $K$. 
The stabilizer of the hyperoval \cite{Ab2017,Ab2019,Payne2005} has order $2m$  and is equal to $Gal(K/\mathbb{F}_2)$. 

Note that the Adelaide hyperoval can be written  \cite{Fisher2006} in the form (\ref{payne-u}) as well, 
by taking even $m$.

The Subiaco \cite{CherPen1996} o-polynomial is given by  
$$h(t) = \frac{d^2t^4 + d^2(1+d+d^2)t^3 + d^2(1+d+d^2)t^2 +d^2t}{(t^2+dt+1)^2} +t^{1/2} $$
where $d\in F$, $tr(1/d)=1$, and $d\not\in \mathbb{F}_4$ for $m\equiv 2 \pmod{4}$. 
This o-polynomial gives rise to two inequivalent hyperovals when $m\equiv 2 \pmod{4}$ 
and to a unique hyperoval when $m\not\equiv 2 \pmod{4}$. 
The stabilizer group of  the Subiaco hyperoval was calculated in \cite{OKeefe1996,Payne1995}. 
There is other description \cite{Ab2017,Ab2019} of the Subiaco hyperovals by using functions $g(u)$ which looks much 
more simpler. 

Let $m\not\equiv 2 \pmod{4}$. Then $q +1 \not\equiv 0 \pmod{5}$ and the Subiaco hyperoval given by  
$$g(u)=1+ u^{5} + \bar{u}^{5}.$$ 
Its stabilizer has order $2m$ and is equal to $Gal(K/\mathbb{F}_2) = \langle \tau \rangle$, $\tau(x)=x^2$.

Let $m\equiv 2 \pmod{4}$. Then $q+1 \equiv 0 \pmod{5}$.  Let $S= \langle w\rangle$. 
Then we take 
$$g_i(u)=1+ w^i u^{5} + \bar{w}^i \bar{u}^{5},$$
where $i=0$ or $i=1$. Let $\tau (x) = x^2$, $v=w^{(q+1)/5}$ and $\varphi_v (x) = vx$.  

If the Subiaco hyperoval given by the function $g_0(u)=1+ u^{5} + \bar{u}^{5}$  
then its stabilizer has order $10m$ and is equal to the semidirect product 
$\langle \varphi_{v} \rangle \langle \tau \rangle$. 

If the Subiaco hyperoval given by $g_1(u)=1+ w u^{5} + \bar{w}\bar{u}^{5}$ 
then its stabilizer has order $5m/2$ and is equal to the cyclic group 
$\langle \varphi_{w^3}\tau^4 \rangle$.

 \subsection{Niho bent functions in small dimensions}
 \label{small}

In this section we consider Niho bent functions in dimensions up to $m=6$. Let $a$ be an element from $K$ 
with property $a+\bar{a}=1$.  Calculations are done with the help of Magma \cite{Bosma}. 

Let $m=1$. Then there is only one hyperoval (the hyperconic) and its automorphism group is transitive 
on the points of the hyperoval \cite{Hir}. 
Therefore, there is only one oval and only one Niho bent function up to equivalence, 
which is obtained from the function $g(u)=1$. Hence the bent function is equivalent to $f(x)=tr(x^3) =Tr(ax^3)$.

Let $m=2$. Then there is again only one hyperoval (the hyperconic) and its automorphism group is transitive on 
the points of the hyperoval \cite{Hir}. 
Therefore, there is only one oval and only one Niho bent function up to equivalence, they are obtained from 
the function $g(u)=1$, and the bent function is equivalent to  $f(x)=tr(x^{10}) =Tr(ax^{10})$.

Let $m=3$. Then there is only one hyperoval (the hyperconic) and its automorphism group has two orbits on the points 
of the hyperoval \cite{Hir}. 
Therefore, there are two ovals (consequently two Niho bent functions) up to equivalence, they are obtained from 
the functions $g(u)=1$ and  $g'(u) =1+u^4+\bar{u}^4$. 
Bent functions are $f(x)=tr(x^{36}) = Tr(ax^{36})$ and $f'(x)=tr(x^{36}+x^{22}+x^{50}) = Tr(ax^{36}+x^{22})$. 

Let $m=4$. Then there are two inequivalent hyperovals,  the hyperconic and the Lunelli-Sce hyperoval. 
The automorphism group of the hyperconic has two orbits on the points of the hyperoval \cite{Hir}. 
Therefore, there are two related ovals (consequently two Niho bent functions) up to equivalence, 
obtained from functions $g(u)=1$ and  $g'(u) = 1+u^4+\bar{u}^4+u^5+\bar{u}^5+u^8+\bar{u}^8$.  
The automorphism group of the Lunelli-Sce hyperoval is transitive on the points of the hyperoval \cite{Brown2000,Korch78}. 
Therefore, it determines only one oval (consequently one Niho bent function) with the function 
$g''(u)= 1+u^5 +\bar{u}^5$ 
(the Lunelli-Sce hyperoval is the first non-trivial member of the Subiaco \cite{Brown2000,CherPen1996} 
and the Adelaide families \cite{Cher2003}). 
Bent functions associated with the hyperconic are  $f(x)=tr(x^{136}) =Tr(ax^{136})$ and 
$f'(x)=tr(x^{136}+x^{106}+x^{226}+x^{76}+\bar{x}^{106}+\bar{x}^{226}+\bar{x}^{76}) =
Tr(ax^{136}+x^{106}+x^{226}+x^{76})$. 
Bent function associated with the Lunelli-Sce hyperoval is   
$f''(x)=tr(x^{136}+x^{226}+\bar{x}^{226}) =Tr(ax^{136}+x^{226})$.

\begin{table}[ht] 
\caption{Hyperovals in $AG(2,32)$ and associated $g$-functions  } 
\label{oval32}
\begin{center}
\begin{tabular}{|c|c|c|}
\hline
Hyperoval & function $g(u)$  & $|Aut|$ \\
 \hline
Hyperconic & 1 & 163680 \\
\hline
Translation &    $1+ u^{16} + \bar{u}^{16}$   & 4960  \\  
\hline 
Segre    &       $1+ \omega u^9 + \bar{\omega}\bar{u}^9 + \bar{\omega} u^{12} +  \omega \bar{u}^{12}$,    & 465 \\
       &        $\omega \in S$, $\omega^3 =1$, $\omega \neq 1$     &  \\     
\hline 
Subiaco & $1+ u^5 + \bar{u}^5 + u+\bar{u}$   &   10 \\
(Payne)  &      &  \\                  
\hline
Cherowitzo &    $u^5+ u^8 + u^9 + \omega u^{12} + \omega u^{13} + \omega u^{16} + $  &   5\\
   & $\bar{u}^5+ \bar{u}^8 +\bar{u}^9+\bar{\omega} \bar{u}^{12}+\bar{\omega}\bar{u}^{13}+\bar{\omega} \bar{u}^{13}$,&\\       
       &        $\omega \in S$, $\omega^3 =1$, $\omega \neq 1$  &     \\     
       \hline                               
O'Keefe-Penttila  & $1+ \varepsilon^{123} u^9 + \bar{\varepsilon }^{123}\bar{u}^9 + u^{12} +  \bar{u}^{12}$, & 3\\
       &        $\varepsilon \in K$, $\varepsilon^{10}+\varepsilon^6+\varepsilon^5+\varepsilon^3+\varepsilon^2+\varepsilon +1=0$    &   \\      
\hline             
\end{tabular}
\end{center}
\label{oval5}
\end{table}

\begin{table}[ht]
\caption{Hyperovals in $AG(2,64)$ and associated $g$-functions  } 
\label{oval64}
\begin{center}
\begin{tabular}{|c|c|c|}
\hline
Hyperoval & function $g(u)$  & $|Aut|$ \\
 \hline
Hyperconic & 1  &  1572480  \\
\hline 
Subiaco & $1+ u^5 + \bar{u}^5$   &   60 \\
\hline 
Subiaco & $1+ w u^5 + \bar{w}^5\bar{u}^5$,   &   15 \\
       &        $S= \langle w \rangle$     &  \\      \hline 
Adelaide    &       $1+ u^{21} + \bar{u}^{21}$   & 12  \\
\hline             
\end{tabular}
\end{center}
\label{default}
\end{table}

Let $m=5$. Then there are 6 hyperovals, they are listed in Table \ref{oval32}. 
The hyperconic (2 orbits), thanslation  (3 orbits) and 
Segre  (2 orbits) hyperovals were analyzed in the previous subsections. 
The Subiaco and Payne hyperovals are equivalent for $m=5$. 
The stabilizer of this hyperoval is generated by automorphism $\tau$, where $\tau (x) = x^2$. 
The orbits of the stabilizer are: $\{ 0\}$, $\{ 1\}$, $\{ \omega, \bar{\omega}\}$,  and three other 
sets each containing 10 elements. 
Short orbits have the following $g$-functions: 
$g_0= 1+ u +u^5 + \bar{u} +  \bar{u}^5$, 
$g_1(u) = 1+u^5 +u^8 + u^{12} + u^{13}  + \bar{u}^5 +\bar{u}^8 + \bar{u}^{12} + \bar{u}^{13}$,  
$g_{\omega}(u) =  u^4 + \omega u^5 +  \bar{\omega} u^9 + u^{12} + \bar{\omega} u^{16} + 
 \bar{u}^4 + \bar{\omega} \bar{u}^5 +  \omega \bar{u}^9 + \bar{u}^{12} + \omega \bar{u}^{16}$.

For the Cherowitzo hyperoval the orbits are: $\{ 0\}$, $\{ 1\}$, $\{ \omega\}$, $\{\bar{\omega}\}$ and six other sets 
with 5 elements. The stabilizer of the Cherowitzo hyperoval is generated by automorphism $\tau^2$. 
Short orbits have the following $g$-functions: 
$g_0= u^5+ u^8 + u^9 + \omega u^{12} + \omega u^{13} + \omega u^{16} + 
 \bar{u}^5+ \bar{u}^8 +\bar{u}^9+\bar{\omega} \bar{u}^{12}+\bar{\omega}\bar{u}^{13}+\bar{\omega} \bar{u}^{13}$, 
$g_1(u) = 1 + \bar{\omega}u^4 + \omega u^5 + u^8 + \bar{\omega}u^9 
+ \omega u^{12} + \omega u^{13} + \omega u^{16}  
+ \omega \bar{u}^4 + \bar{\omega} \bar{u}^5 + \bar{u}^8 + \omega \bar{u}^9 
+ \bar{\omega} \bar{u}^{12} + \bar{\omega} \bar{u}^{13} + \bar{\omega} \bar{u}^{16}$, 
$g_{\omega}(u) =  \omega u^4 + \omega u^8 + u^9 + \bar{\omega} u^{12} + \omega u^{13} +  \omega u^{16} 
+ \bar{\omega} \bar{u}^4 + \bar{\omega} \bar{u}^8 + \bar{u}^9 + \omega \bar{u}^{12} 
+ \bar{\omega} \bar{u}^{13} +  \bar{\omega} \bar{u}^{16}$, 
$g_{\bar{\omega}}(u) = 1 + \bar{\omega} u^4 + u^5 + \bar{\omega} u^8 
+ \bar{\omega} u^9 + \omega u^{12} + \bar{\omega} u^{13} + u^{16}   
+ \omega \bar{u}^4 + \bar{u}^5 + \omega \bar{u}^8 
+ \omega \bar{u}^9 + \bar{\omega} \bar{u}^{12} + \omega \bar{u}^{13} + \bar{u}^{16}$.

For the O'Keefe-Penttila hyperoval \cite{OKeefe1992} the orbits are: $\{ 0\}$ and 11 other sets each 
containing 3 elements. Stabilizer of the O'Keefe-Penttila hyperoval is generated by the automorphism 
$\varphi_{\omega} (x)= \omega x$.

Let $m=6$. Then there are 4 known hyperovals, they are listed in Table \ref{oval64}. 
The hyperconic was analyzed in the previous sections. 
The stabilizer of the Subiaco hyperoval defined by $g(u)=1+ u^5 + \bar{u}^5$ has 3 orbits, 
containing 1, 5 and 60 elements respectively. 
$g$-function for the orbit with 5 elements is 
$g_1(u) = 1 + u^4  + u^5 + u^9 + u^{13} + u^{17} + u^{21} + u^{24}  +  u^{25} + u^{29} + 
\bar{u}^4  + \bar{u}^5 + \bar{u}^9 + \bar{u}^{13} + \bar{u}^{17} + \bar{u}^{21} + \bar{u}^{24}  +  
\bar{u}^{25} + \bar{u}^{29}$.

The stabilizer of the Subiaco hyperoval defined by $g(u)=1+ w u^5 + \bar{w}^5\bar{u}^5$ has 6 orbits: 
one orbit is $\{ 0\}$, the second orbit contains 5 elements, and four other orbits each containing 15 elements. 

The stabilizer of the Adelaide hyperoval defined by $g(u)=1+ u^{21} + \bar{u}^{21}$ has 8 orbits, 
two orbits are $\{ 0\}$ and $\{ 1\}$ (which has corresponding function 
$g_1(u) =  1 +  u^4 + u^5 + u^9 + u^{12} + u^{13} + u^{16} + u^{17} + u^{20} + u^{24} + u^{25} +  u^{32}  
+  \bar{u}^4 + \bar{u}^5 + \bar{u}^9 + \bar{u}^{12} + \bar{u}^{13} + \bar{u}^{16} + \bar{u}^{17} + \bar{u}^{20} 
+ \bar{u}^{24} + \bar{u}^{25} +  \bar{u}^{32}$), 
one orbit contains 4 elements, and five orbits each containing 12 elements.

\section{Conclusion}
\label{conclusion}

We considered equivalence classes of Niho bent functions associated with hyperovals. 
For all known types of hyperovals we described the equivalence classes of the corresponding Niho bent functions.   
For some types of hyperovals  the number of equivalence classes of the associated Niho bent functions are 
at most 4. In general, the number of equivalence classes of  associated 
Niho bent functions increases exponentially as the dimension of the underlying vector space grows. 
The equivalence classes were considered in detail in small dimensions.

\bigskip

{\bf Acknowledgments}

\medskip

The author would like to thank Claude Carlet, Sihem Mesnager and Alexander Pott for valuable discussions. 
This work was supported by grant 31S1366.  


\end{document}